\tikzset{
	0c/.style={circle, draw, fill, inner sep=.7pt},
	1c/.style={->, shorten <=2pt, shorten >=2pt},
	1clong/.style={->},
	edge/.style={shorten <=2pt, shorten >=2pt},
	equal/.style={shorten <=2pt, shorten >=2pt, double},
	1cinc/.style={right hook->, shorten <=2pt, shorten >=2pt},
	1csurj/.style={->>, shorten <=2pt, shorten >=2pt},
	1cincl/.style={left hook->, shorten <=2pt, shorten >=2pt},
	1cloop/.style={arloop->, shorten <=2pt, shorten >=2pt},
	2c/.style={double distance=1.5pt, shorten <=6pt, shorten >=8pt, decoration={markings,mark=at position -6pt with {\arrow[scale=1.5]{>}}}, preaction={decorate}},
	follow/.style={->, >=stealth, ultra thick, shorten <=3pt, shorten >=3pt, color=gray!70},
	bar/.style={ultra thick, shorten <=3pt, shorten >=3pt, color=magenta},
	arlabel/.style={scale=.8},
}
\newenvironment{codecomm}{
	\begin{center}\small
	\begin{tcolorbox}[width=0.9\textwidth, boxrule=0pt, colback=gray!20, arc=0pt, left=0pt, top=0pt, bottom=0pt, right=0pt, boxsep=.6em, colframe=gray!20]
}{
	\end{tcolorbox}
	\end{center}
}
\newenvironment{algor}{
	\begin{center}
	\begin{tcolorbox}[width=0.9\textwidth, boxrule=1pt, colback=white, arc=0pt, left=0pt, top=0pt, bottom=0pt, right=0pt, boxsep=.6em, colframe=black]
}{
	\end{tcolorbox}
	\end{center}
}
\newcommand\diagset{\data{DiagSet}}
\newcommand\syncat{\cat{Ctx}[\diagset]}
\newcommand\lmap[3]{\ell^{#3}_{{#1} \incl {#2}}}
\newcommand\rmap[3]{r^{#3}_{{#1} \incl {#2}}}
\newcommand\eqdef{\coloneqq}
\newcommand\nbd{\nobreakdash-\hspace{0pt}}
\newcommand\idd[1]{\mathrm{id}_{#1}}
\newcommand\restr[2]{{#1}{\raisebox{0pt}{$|_{#2}$}}}
\newcommand\incl{\hookrightarrow}
\newcommand\incliso{\stackrel{\sim}{\hookrightarrow}}
\newcommand\surj{\twoheadrightarrow}
\newcommand\slice[2]{{#1}/{\raisebox{-2pt}{$#2$}}}
\newcommand\opp[1]{{#1}^\mathrm{op}}
\newcommand\hasse[1]{\mathscr{H}#1}
\newcommand\dmn[1]{\mathrm{dim}(#1)}
\newcommand\clos[1]{\mathrm{cl}#1}
\NewDocumentCommand \bord{g g} {\IfNoValueTF{#2}{%
	\IfNoValueTF{#1}{\partial}{\partial_{#1}}}{\partial_{#1}^{#2}}}
\NewDocumentCommand \sbord{g g} {\IfNoValueTF{#2}{%
	\IfNoValueTF{#1}{\Delta}{\Delta_{#1}}}{\Delta_{#1}^{#2}}}
\newcommand\cp[1]{\,{\scriptstyle\#}_{#1}\,}
\newcommand\submol{\sqsubseteq}
\newcommand\maxd[2]{\mathscr{M}_{#1}#2}
\newcommand\celto{\Rightarrow}
\newcommand\atom{{\raisebox{-.02em}{%
\begin{tikzpicture}[baseline={(current bounding box.south)}]%
	\node[circle, draw, line width=.05em, inner sep=.25em] at (0,0) {};%
	\node[circle, fill, inner sep=.06em] at (0,0) {};%
	\node[circle, fill, inner sep=.06em] at (0,.35em) {};%
\end{tikzpicture}}}}
\newcommand\skel[2]{\sigma_{\leq #1}#2}
\newcommand\gen[1]{\mathscr{#1}}
\newcommand\intp[1]{\llbracket{#1}\rrbracket}
\newcommand\cat[1]{\mathbf{#1}}
\newcommand\psh[2]{\mathrm{PSh}_{#1}(#2)}
\newcommand\ogpos{\cat{ogPos}}
\newcommand\dgmset{\atom\cat{Set}}
\newcommand\dgmcpx{\atom\cat{Cpx}_\textit{fsb}}
\newcommand\code[1]{\texttt{#1}}
\newcommand\data[1]{\mathsf{#1}}
\newtheoremstyle{ittheorem}
  {\topsep}   
  {\topsep}   
  {\itshape}  
  {0pt}       
  {\itshape \bfseries} 
  { ---}         
  {5pt plus 1pt minus 1pt} 
  {}          
\newtheoremstyle{itdfn}
  {\topsep}   
  {\topsep}   
  {}  
  {0pt}       
  {\bfseries} 
  {.}         
  {5pt plus 1pt minus 1pt} 
  {}          
  \renewcommand\@upn{\textit}
\lstdefinestyle{codestyle}{
	backgroundcolor=\color{gray!20},
   	keywordstyle=\color{magenta},
   	numberstyle=\tiny\color{gray},
	stringstyle=\color{blue},
   	basicstyle=\ttfamily\footnotesize,
   	breakatwhitespace=false,
	breaklines=true,
	captionpos=b,
	keepspaces=true,
	numbers=left,
	numbersep=5pt,
	showspaces=false,
	showstringspaces=false,
	showtabs=false,
	tabsize=2
}
\theoremstyle{ittheorem}
\newtheorem{thm}{Theorem}[section]
\newtheorem{prop}[thm]{Proposition}
\newtheorem{lem}[thm]{Lemma}
\theoremstyle{itdfn}
\newtheorem{dfn}[thm]{}
\theoremstyle{remark}
\newtheorem{impl}[thm]{Implementation}
\newtheorem{rmk}[thm]{Remark}
\newtheorem{comm}[thm]{Comment}
\newtheorem{exm}[thm]{Example}
\title{Data Structures for Topologically Sound \\ Higher-Dimensional Diagram Rewriting}
\author{Amar Hadzihasanovic
\institute{$^1$ Tallinn University of Technology \\ $^2$ Quantinuum, 17 Beaumont Street, Oxford, UK}
\email{amar@ioc.ee}
\and
Diana Kessler
\institute{Tallinn University of Technology}
\email{diana-maria.kessler@taltech.ee}
}
\begin{document}
\maketitle

\begin{abstract}
We present a computational implementation of diagrammatic sets, a model of higher-dimensional diagram rewriting that is ``topologically sound'': diagrams admit a functorial interpretation as homotopies in cell complexes.
This has potential applications both in the formalisation of higher algebra and category theory and in computational algebraic topology.
We describe data structures for well-formed shapes of diagrams of arbitrary dimensions and provide a solution to their isomorphism problem in time $O(n^3 \log n)$.
On top of this, we define a type theory for rewriting in diagrammatic sets and provide a semantic characterisation of its syntactic category.
All data structures and algorithms are implemented in the Python library \code{rewalt}, which also supports various visualisations of diagrams.
\end{abstract}

\section*{Introduction}

This article concerns the computational implementation of higher\nbd dimensional diagrams in the sense of higher category theory, and contains some first steps in the computational complexity theory of diagrammatic rewriting in arbitrary dimensions.

Higher\nbd dimensional rewriting, as emergent from the theory of polygraphs \cite{burroni1993higher} -- see \cite{guiraud2019rewriting} for a survey -- is founded on an interpretation of \emph{rewrites as directed homotopies}.
A particular aim of our work is provable \emph{topological soundness}, namely, the existence of a functorial interpretation of rewrite systems as cell complexes, and of rewrites as homotopies.
This ensures that our implementation of higher\nbd dimensional rewriting can act as a formal system for homotopical algebra and higher category theory in all generality.

With this aim, we turn to the \emph{diagrammatic set} model \cite{hadzihasanovic2020diagrammatic} developed by the first author as a combinatorial alternative to polygraphs.
Diagrammatic sets have a dual nature as higher\nbd dimensional rewrite systems and ``combinatorial directed cell complexes''.
They support a model of weak higher categories and, unlike polygraphs, are topologically sound.

Beside the formalisation of higher algebra and category theory, potential applications are manifold. 
\emph{String diagram rewriting}, which is a form of 3\nbd dimensional rewriting, is arguably the characteristic computational mechanism of applied category theory.
It has been suggested \cite{bonfante2009polygraphic} that even ``classical'' forms of rewriting are more faithfully represented as diagram rewriting: for example, term rewriting implemented as rewriting in monoidal categories with cartesian structure explicitates the ``hidden costs'' of copying and deleting terms. 
In these contexts, it is important to have a grasp on the computational complexity of the basic operations of diagram rewriting, to ensure that one's cost model for a machine operating by diagram rewriting is reasonable.

\emph{Via} topological soundness, we also envisage applications to computational algebraic topology.
Directedness of cells gives an \emph{algebraic} grip on their pasting, which lends itself better to computation.
Directed cell complexes are also equipped with an orientation on their cells, which makes them naturally suited to the computation of cellular homology.

\subsection*{Structure of the paper}
In Section \ref{sec:structures}, we present some basic data structures from the theory of diagrammatic sets, together with their formal encoding: in particular, \emph{oriented graded posets} which are used to encode shapes of diagrams.

In Section \ref{sec:uniquerep}, we focus on the implementation of \emph{regular molecules}, the inductive subclass of oriented graded posets corresponding to well-formed shapes of diagrams.
To construct regular molecules, we need to decide their isomorphism problem; for general oriented graded posets, this is equivalent to the graph isomorphism problem (Proposition \ref{prop:gicomplete}), not known to be in $\data{P}$.
Our main result is a solution to the isomorphism problem for regular molecules in time $O(n^3 \log n)$ (Theorem \ref{thm:complexity}), which also gives us a canonical form, hence a unique representation of shapes of diagrams.

In Section \ref{sec:diagrams}, we move on to the formalisation of diagrams and diagrammatic sets. 
We present this in the form of a type theory $\diagset$ living ``on top'' of our implementation of shapes of diagrams: the terms, corresponding to diagrams, are ``filtered by regular molecules''.
This allows us to define formal semantics and give a semantic characterisation of our formal system (Theorem \ref{thm:synchar}).

\subsection*{Related work}
A number of type theories for higher-categorical structures of arbitrary dimension have been defined in recent years: most notably, Finster and Mimram's $\data{CaTT}$ \cite{finster2017type}, implementing the Maltsiniotis model of weak higher categories \cite{benjamin2021globular}, together with its ``strictly associative'' \cite{finster2021associative} and ``strictly unital'' \cite{finster2020unital} variants; and the \emph{opetopic} type theories by Ho Thanh, Curien, and Mimram \cite{thanh2019sequent, curien2019syntactic}.

The former are not particularly concerned with diagram rewriting, and focus instead on the implementation of coherent globular composition; the link to our work is tenuous.
The latter have some commonality, albeit with a focus on a more restrictive class of shapes.
In fact, $\diagset$ takes some inspiration not from one of the published opetopic type theories, but from a privately communicated variant due to Curien, which similarly rests on a ``black-boxed'' implementation of opetopic shapes.

Most closely related is the work by Vicary, Bar, Dorn, and others on quasistrict \cite{bar2017data} and later associative \cite{dorn2018associative, reutter2019high} $n$\nbd categories, serving as the foundation of the $\data{homotopy.io}$ proof assistant.
While the aim is nearly the same, we believe that our framework has a number of advantages over associative $n$\nbd categories.

From a theoretical perspective, it is only conjectural that associative $n$\nbd categories, in general, are topologically sound or satisfy the homotopy hypothesis. 
They also currently lack connections with other models of higher categories and a clear functorial viewpoint.
On the other hand, diagrammatic sets are topologically sound, satisfy a version of the homotopy hypothesis, and support a model of weak higher categories with concrete functorial ties to well-established models.

From a user perspective, the main point of divergence is that diagrams in associative $n$\nbd categories have ``strict units'' but ``weak interchange'', while our diagrams have ``strict interchange'' but need weak units to model ``nullary'' inputs or outputs.
For rewrite systems with many ``nullary'' generators, associative $n$\nbd categories may have a practical advantage, while diagrammatic sets are otherwise favoured.

Finally, in associative $n$\nbd categories, diagram shapes are essentially descriptions of cubical tilings, and by lack of strict interchange, each rewrite gets by default its own ``layer'' in the tiling.
This makes it so a ``local'' rewrite on a portion of a diagram leads to an inefficient ``global'' duplication of information.
Our ``face poset'' representation of diagrams, on the other hand, allows local rewrites to stay local, which is more efficient and will be beneficial to the parallelisability of diagram rewriting.

\subsection*{Implementation}
All data structures, algorithms, and systems discussed in this article were implemented by the authors as part of a Python library for higher\nbd dimensional rewriting and algebra, called \code{rewalt}.\footnote{Code: \url{https://github.com/ahadziha/rewalt}. Documentation: \url{https://rewalt.readthedocs.io}.}
An example of \code{rewalt} code is included in Example \ref{exm:lunital}.
The library also supports various kinds of visualisation for diagrams, optionally in the form of TikZ output.
All the Hasse and string diagrams in this article were generated by \code{rewalt} and included here with no subsequent retouching.

\subsection*{Acknowledgements}
This work was supported by the ESF funded Estonian IT Academy research measure (project 2014-2020.4.05.19-0001) and by the Estonian Research Council grant PSG764.

\section{Basic data structures} \label{sec:structures}

\begin{dfn}
In the theory of diagrammatic sets, the shape of a pasting diagram is encoded by its \emph{face poset}, recording whether a cell is located in the boundary of another cell, together with \emph{orientation} data which specifies whether an $(n-1)$\nbd dimensional cell is in the \emph{input} or \emph{output} half of the boundary of an $n$\nbd dimensional cell.
We call the mathematical structure containing these data an \emph{oriented graded poset}.
This is essentially the same as what Steiner calls a \emph{directed precomplex} \cite{steiner1993algebra} and Forest an \emph{$\omega$\nbd hypergraph} \cite{forest2019unifying}.
\end{dfn}

\begin{dfn}[Graded poset]
Let $P$ be a finite poset with order relation $\leq$ and let $P_\bot$ be $P$ extended with a least element $\bot$.
We say that $P$ is \emph{graded} if, for all $x \in P$, all directed paths from $x$ to $\bot$ in the Hasse diagram $\hasse{P_\bot}$, with edges going from covering to covered elements, have the same length.
If this length is $n+1$, we let $\dmn{x} \eqdef n$ be the \emph{dimension} of $x$. We write $P_n$ for the subset of $n$\nbd dimensional elements of $P$.
\end{dfn}

\begin{dfn}[Oriented graded poset]
An \emph{orientation} on a finite poset $P$ is an edge-labelling of its Hasse diagram with values in $\{+,-\}$. An \emph{oriented graded poset} is a finite graded poset with an orientation.
\end{dfn}

\begin{impl}
If we linearly order the elements of an oriented graded poset in each dimension, each element $x$ is uniquely identified by a pair of integers $(n, k)$, where $n$ is the \emph{dimension} of $x$, and $k$ is the \emph{position} of $x$ in the linear ordering of $n$\nbd dimensional elements.

\begin{codecomm}
The class \code{rewalt.ogposets.El} is an overlay for pairs of integers with methods \code{dim}, \code{pos} returning the first and second projection. 
\end{codecomm}

\noindent We then represent an oriented graded poset as a pair $(\data{face\_data}, \data{coface\_data})$ of \emph{arrays of arrays of pairs of sets of integers}, where
\begin{enumerate}
	\item $j \in \data{face\_data}[n][k][i]$ if and only if $(n-1, j)$ is covered by $(n, k)$, and
	\item $j \in \data{coface\_data}[n][k][i]$ if and only if $(n+1, j)$ covers $(n, k)$
\end{enumerate}
with orientation $-$ ($i = 0$) or $+$ ($i = 1$). We may implement the sets of integers as sorted arrays, or another data type which supports binary search in logarithmic time. This defines a data type $\data{OgPoset}$.

This representation is essentially an adjacency list representation of the poset's Hasse diagram, with vertices separated according to their dimension, and incoming and outgoing edges separated according to their label. 
If $E_P$ is the set of edges of the Hasse diagram of $P$, the $\data{OgPoset}$ representation of $P$ takes space $O(|P| + |E_P|)$.

Storing both $\data{face\_data}$ and $\data{coface\_data}$ is redundant since these are uniquely determined by each other. 
However, most of the computations we need to perform on oriented graded posets require regular access both to faces (covered elements) and cofaces (covering elements) of a given element, so it is advantageous to be able to access them in constant time.

\begin{codecomm}
Objects of the class \code{rewalt.ogposets.OgPoset} are built from \code{face\_data}, \code{coface\_data} as described. The class has an alternative constructor \code{from\_face\_data(face_data)} which computes \code{coface\_data} from the single datum of \code{face\_data}.
\end{codecomm} 
\end{impl}

\begin{exm} \label{exm:whisker}
Consider a diagram formed of one 2\nbd cell with two input 1\nbd cells and a single output 1\nbd cell, whiskered to the right with a single 1\nbd cell. The following are representations of its shape as
\begin{itemize}
	\item an oriented face poset, pictured as a Hasse diagram with input faces pointing upwards (in magenta) and output faces downwards (in blue);
	\item a string diagram (0-cells are unlabelled, but correspond to bounded regions of the plane);
	\item the pair of $\data{face\_data}$ and $\data{coface\_data}$ (rows are outer array indices and columns inner array indices).
\end{itemize}
\vspace{-5pt}
\begin{center}\footnotesize
	\begin{tikzpicture}[baseline={([yshift=-.5ex]current bounding box.center)}, xscale=4, yscale=4]
\path[fill=white] (0, 0) rectangle (1, 1);
\draw[->, draw=magenta] (0.125, 0.19999999999999998) -- (0.125, 0.4666666666666667);
\draw[->, draw=magenta] (0.19999999999999998, 0.19999999999999998) -- (0.8, 0.4666666666666667);
\draw[->, draw=magenta] (0.375, 0.19999999999999998) -- (0.375, 0.4666666666666667);
\draw[->, draw=magenta] (0.625, 0.19999999999999998) -- (0.625, 0.4666666666666667);
\draw[->, draw=blue] (0.15, 0.4666666666666667) -- (0.35, 0.19999999999999998);
\draw[->, draw=magenta] (0.16249999999999998, 0.5333333333333333) -- (0.4625, 0.7999999999999999);
\draw[->, draw=blue] (0.4, 0.4666666666666667) -- (0.6, 0.19999999999999998);
\draw[->, draw=magenta] (0.3875, 0.5333333333333333) -- (0.4875, 0.7999999999999999);
\draw[->, draw=blue] (0.65, 0.4666666666666667) -- (0.85, 0.19999999999999998);
\draw[->, draw=blue] (0.85, 0.4666666666666667) -- (0.65, 0.19999999999999998);
\draw[->, draw=blue] (0.5375, 0.7999999999999999) -- (0.8375, 0.5333333333333333);
\node[text=black, font={\scriptsize \sffamily}, xshift=0pt, yshift=0pt] at (0.125, 0.16666666666666666) {0};
\node[text=black, font={\scriptsize \sffamily}, xshift=0pt, yshift=0pt] at (0.375, 0.16666666666666666) {1};
\node[text=black, font={\scriptsize \sffamily}, xshift=0pt, yshift=0pt] at (0.625, 0.16666666666666666) {2};
\node[text=black, font={\scriptsize \sffamily}, xshift=0pt, yshift=0pt] at (0.875, 0.16666666666666666) {3};
\node[text=black, font={\scriptsize \sffamily}, xshift=0pt, yshift=0pt] at (0.125, 0.5) {0};
\node[text=black, font={\scriptsize \sffamily}, xshift=0pt, yshift=0pt] at (0.375, 0.5) {1};
\node[text=black, font={\scriptsize \sffamily}, xshift=0pt, yshift=0pt] at (0.625, 0.5) {2};
\node[text=black, font={\scriptsize \sffamily}, xshift=0pt, yshift=0pt] at (0.875, 0.5) {3};
\node[text=black, font={\scriptsize \sffamily}, xshift=0pt, yshift=0pt] at (0.5, 0.8333333333333333) {0};
\end{tikzpicture}
	\quad \quad
	\begin{tikzpicture}[xscale=3, yscale=3, baseline={([yshift=-.5ex]current bounding box.center)}]
\path[fill=gray!10] (0, 0) rectangle (1, 1);
\draw[color=black, opacity=1] (0.3333333333333333, 0.5) .. controls (0.3333333333333333, 0.5) and (0.3333333333333333, 0.5833333333333333) .. (0.3333333333333333, 0.75);
\draw[color=black, opacity=1] (0.3333333333333333, 1) .. controls (0.3333333333333333, 1.0) and (0.3333333333333333, 0.9166666666666666) .. (0.3333333333333333, 0.75);
\draw[color=black, opacity=1] (0.75, 0) .. controls (0.75, 0.0) and (0.75, 0.16666666666666666) .. (0.75, 0.5);
\draw[color=black, opacity=1] (0.75, 1) .. controls (0.75, 1.0) and (0.75, 0.8333333333333333) .. (0.75, 0.5);
\draw[color=black, opacity=1] (0.3333333333333333, 0.5) .. controls (0.4444444444444444, 0.5) and (0.5, 0.41666666666666663) .. (0.5, 0.25);
\draw[color=black, opacity=1] (0.5, 0) .. controls (0.5, 0.0) and (0.5, 0.08333333333333333) .. (0.5, 0.25);
\draw[color=black, opacity=1] (0.3333333333333333, 0.5) .. controls (0.2777777777777778, 0.5) and (0.25, 0.41666666666666663) .. (0.25, 0.25);
\draw[color=black, opacity=1] (0.25, 0) .. controls (0.25, 0.0) and (0.25, 0.08333333333333333) .. (0.25, 0.25);
\node[circle, fill=black, draw=black, inner sep=1pt] at (0.3333333333333333, 0.5) {};
\node[text=magenta, font={\scriptsize \sffamily}, xshift=4pt, yshift=4pt] at (0.3333333333333333, 0.75) {3};
\node[text=magenta, font={\scriptsize \sffamily}, xshift=4pt, yshift=4pt] at (0.75, 0.5) {2};
\node[text=magenta, font={\scriptsize \sffamily}, xshift=4pt, yshift=4pt] at (0.5, 0.25) {1};
\node[text=magenta, font={\scriptsize \sffamily}, xshift=4pt, yshift=4pt] at (0.25, 0.25) {0};
\node[text=magenta, font={\scriptsize \sffamily}, xshift=4pt, yshift=4pt] at (0.3333333333333333, 0.5) {0};
\end{tikzpicture}
	\quad \quad
	\
	\begin{tabular}{l l l l}
		\multicolumn{4}{l}{$\data{face\_data}$:} \\
		$([], [])$ & $([], [])$ & $([], [])$ & $([], [])$ \\
		$([0], [1])$ & $([1], [2])$ & $([2], [3])$ & $([0], [2])$ \\
		$([0, 1], [3])$ & & & \\
		\multicolumn{4}{l}{$\data{coface\_data}$:} \\
		$([0, 3], [])$ & $([1], [0])$ & $([2], [1, 3])$ & $([], [2])$ \\
		$([0], [])$ & $([0], [])$ & $([], [])$ & $([], [0])$ \\
		$([], [])$ & & &	
	\end{tabular}
\end{center}
\end{exm}

\begin{rmk}
The representation of an oriented graded poset (up to isomorphism) is not unique: any permutation of the linear order on elements in each dimension leads to an equivalent representation.
\end{rmk}

\begin{dfn}
Many important computations are performed on \emph{(downwards) closed subsets}, rather than the whole of an oriented graded poset. 
In particular, the structure of an oriented graded poset supports a purely combinatorial definition of the input and output boundary of a closed subset.
\end{dfn}

\begin{dfn}[Closed subsets]
Let $P$ be an oriented graded poset and $U \subseteq P$. 
We say that $U$ is \emph{closed} if, for all $y \in U$ and $x \in P$, if $x \leq y$ then $x \in U$. The \emph{closure} of $U$ is the subset $\clos{U} \eqdef \{x \in P \mid \exists y \in U \; x \leq y\}$.

We let $\dmn{U}$ be the maximum of $\dmn{x}$ for $x \in U$, or $-1$ if $U$ is empty.
\end{dfn}

\begin{dfn}[Input and output boundaries]
Let $P$ be an oriented graded poset and $U \subseteq P$ a closed subset.
For all $\alpha \in \{+,-\}$ and $n \in \mathbb{N}$, let 
\begin{itemize}
	\item $\sbord{n}{\alpha} U \subseteq U$ be the subset of elements $x$ such that $\dmn{x} = n$ and, if $y \in U$ covers $x$, then it covers it with orientation $\alpha$;
	\item $\maxd{n}{U} \subseteq U$ be the subset of elements $x$ such that $\dmn{x} = n$ and $x$ is maximal in $U$ (not covered by any other element of $U$).
\end{itemize}
The \emph{input} ($\alpha \eqdef -$) or \emph{output} ($\alpha \eqdef +$) \emph{$n$\nbd boundary} of $U$ is the closed subset
\begin{equation*}
	\bord{n}{\alpha} U \eqdef \clos{\Big( \sbord{n}{\alpha} U \cup \bigcup_{k < n} \maxd{k}{U}\Big)}.
\end{equation*}
We let $\bord{n}{} U \eqdef \bord{n}{+}U \cup \bord{n}{-}U$ and omit $n$ when $n = \dmn{U} - 1$. For all $x \in P$, we let $\bord{n}{\alpha} x \eqdef \bord{n}{\alpha} \clos\{x\}$.
\end{dfn}

\begin{rmk}
It is convenient to also let $\bord{-1}{\alpha} U = \bord{-2}{\alpha} U \eqdef \emptyset$, so that $\bord{}{\alpha} U$ is defined for all $U \subseteq P$.
\end{rmk}

\begin{exm}
Let $U$ be the oriented face poset of Example \ref{exm:whisker}. Then
\begin{align*}
	\bord{1}{-}U & = \{(0, 0), (0, 1), (0, 2), (0, 3), (1, 0), (1, 1), (1, 2)\}, \\
	\bord{1}{+}U & = \{(0, 0), (0, 2), (0, 3), (1, 2), (1, 3)\}, \\
	\bord{0}{-}U & = \{(0, 0)\}, \quad \quad \bord{0}{+}U = \{(0, 3)\}.
\end{align*}
\end{exm}

\begin{impl}
We represent a set of elements of an $\data{OgPoset}$ as an \emph{array of sets of positions, indexed by dimensions}.
This allows us to access the subset of elements of a given dimension in constant time.
The size of arrays can be fixed to be equal to the dimension of a specific $\data{OgPoset}$, or dynamically adjusted to the dimension of each set of elements.
Sets of positions can again be implemented as sorted arrays.
This defines a data type $\data{GrSet}$ (for \emph{graded set}).

\begin{codecomm}
Graded sets are implemented in the class \code{rewalt.ogposets.GrSet}, which supports most methods of the in-built Python \code{set} class.

An object of the class \code{rewalt.ogposets.GrSubset} is a \code{GrSet}, the \code{support}, together with a pointer to an \code{OgPoset}, the \code{ambient}; upon initialisation, it is checked that all elements of the graded set are within range of the \code{OgPoset}.
 
The subclass \code{rewalt.ogposets.Closed}, which also checks for closedness, is equipped with methods \code{boundary([sign, n])} for computing input and output boundaries.
\end{codecomm}
\end{impl}

\begin{dfn}[Map of oriented graded posets]
A \emph{map} $f\colon P \to Q$ of oriented graded posets is a function of their underlying sets that satisfies	$\bord{n}{\alpha}f(x) = f(\bord{n}{\alpha}x)$ for all $x \in P$, $n \in \mathbb{N}$, and $\alpha \in \{+,-\}$. We call an injective map an \emph{inclusion}. Oriented graded posets and their maps form a category $\ogpos$.
\end{dfn}

\begin{exm}
A closed subset of an oriented graded poset inherits the structure of an oriented graded poset by restriction. Its subset inclusion is an inclusion of oriented graded posets. 
\end{exm}

\begin{impl}
We represent a map $f\colon P \to Q$ as an \emph{array of arrays of pairs of integers} $\data{mapping}$, together with pointers $\data{source}, \data{target}$ to $\data{OgPoset}$ representations of $P$ and $Q$. This defines a data type $\data{OgMap}$.
As an array of arrays, $\data{mapping}$ has the same size of $P$'s $\data{face\_data}$, and is defined by
\begin{center}
	$\data{mapping}[n][k] = (m, j)$ if and only if $f((n, k)) = (m, j)$.
\end{center}
This representation takes space $O(|P|)$.

\begin{codecomm}
The class \code{rewalt.ogposets.OgMap} also supports \emph{partial} maps, where \code{mapping[n][k]} may be set to \code{None}. When defining an \code{OgMap} on an element, a check is performed by default that the map is defined and compatible on all boundaries of that element. Composition of maps $f, g$ is implemented with syntax \code{f.then(g)}.

The class \code{rewalt.ogposets.Closed} has a method \code{as_map} which returns the inclusion of the closed subset as an \code{OgMap}.
\end{codecomm}
\end{impl}

\section{Unique representation of shapes of diagrams} \label{sec:uniquerep}

\begin{dfn}
In the theory of diagrammatic sets, shapes of diagrams form an inductively generated class of oriented graded posets, called regular \emph{molecules} after Steiner \cite{steiner1993algebra}.
\end{dfn}

\begin{dfn}[Round subset]
Let $U$ be a closed subset of an oriented graded poset, $n \eqdef \dmn{U}$.
We say that $U$ is \emph{round} if, for all $k < n$,
\begin{equation*}
	\bord{k}{+}U \cap \bord{k}{-}U = \bord{k-1}{}U.
\end{equation*}
\end{dfn}

\begin{rmk}
Roundness is called ``spherical boundary'' in \cite{hadzihasanovic2020diagrammatic}.
\end{rmk}

\begin{exm} \label{exm:frob}
Shapes of 2\nbd dimensional diagrams, as oriented face posets, are round precisely when
\begin{enumerate}
	\item their string diagram representation is \emph{connected}, and
	\item all nodes of the string diagram have at least one input and one output wire.
\end{enumerate}
For example, the oriented graded poset of Example \ref{exm:whisker} is not round: we have 
\begin{equation*}
	\bord{0}{}U = \{(0, 0), (0, 3)\} \subsetneq \bord{1}{+}U \cap \bord{1}{-}U = \{(0, 0), (0, 2), (0, 3)\}.
\end{equation*}
On the other hand, the following oriented graded poset is round:
\begin{center}
	\begin{tikzpicture}[xscale=5, yscale=4, baseline={([yshift=-.5ex]current bounding box.center)}]
\path[fill=white] (0, 0) rectangle (1, 1);
\draw[->, draw=magenta] (0.1225, 0.19999999999999998) -- (0.10250000000000001, 0.4666666666666667);
\draw[->, draw=magenta] (0.16249999999999998, 0.19999999999999998) -- (0.4625, 0.4666666666666667);
\draw[->, draw=magenta] (0.3675, 0.19999999999999998) -- (0.30750000000000005, 0.4666666666666667);
\draw[->, draw=magenta] (0.8575, 0.19999999999999998) -- (0.7175, 0.4666666666666667);
\draw[->, draw=magenta] (0.8775, 0.19999999999999998) -- (0.8975000000000001, 0.4666666666666667);
\draw[->, draw=blue] (0.1275, 0.4666666666666667) -- (0.34750000000000003, 0.19999999999999998);
\draw[->, draw=magenta] (0.115, 0.5333333333333333) -- (0.23500000000000001, 0.7999999999999999);
\draw[->, draw=blue] (0.3325, 0.4666666666666667) -- (0.5925, 0.19999999999999998);
\draw[->, draw=magenta] (0.34500000000000003, 0.5333333333333333) -- (0.7050000000000001, 0.7999999999999999);
\draw[->, draw=blue] (0.5375, 0.4666666666666667) -- (0.8375, 0.19999999999999998);
\draw[->, draw=blue] (0.6675000000000001, 0.4666666666666667) -- (0.4075, 0.19999999999999998);
\draw[->, draw=magenta] (0.7050000000000001, 0.5333333333333333) -- (0.745, 0.7999999999999999);
\draw[->, draw=blue] (0.8725, 0.4666666666666667) -- (0.6525, 0.19999999999999998);
\draw[->, draw=blue] (0.275, 0.7999999999999999) -- (0.475, 0.5333333333333333);
\draw[->, draw=blue] (0.295, 0.7999999999999999) -- (0.655, 0.5333333333333333);
\draw[->, draw=blue] (0.765, 0.7999999999999999) -- (0.885, 0.5333333333333333);
\node[text=black, font={\scriptsize \sffamily}, xshift=0pt, yshift=0pt] at (0.125, 0.16666666666666666) {0};
\node[text=black, font={\scriptsize \sffamily}, xshift=0pt, yshift=0pt] at (0.375, 0.16666666666666666) {1};
\node[text=black, font={\scriptsize \sffamily}, xshift=0pt, yshift=0pt] at (0.625, 0.16666666666666666) {2};
\node[text=black, font={\scriptsize \sffamily}, xshift=0pt, yshift=0pt] at (0.875, 0.16666666666666666) {3};
\node[text=black, font={\scriptsize \sffamily}, xshift=0pt, yshift=0pt] at (0.1, 0.5) {0};
\node[text=black, font={\scriptsize \sffamily}, xshift=0pt, yshift=0pt] at (0.30000000000000004, 0.5) {1};
\node[text=black, font={\scriptsize \sffamily}, xshift=0pt, yshift=0pt] at (0.5, 0.5) {2};
\node[text=black, font={\scriptsize \sffamily}, xshift=0pt, yshift=0pt] at (0.7000000000000001, 0.5) {3};
\node[text=black, font={\scriptsize \sffamily}, xshift=0pt, yshift=0pt] at (0.9, 0.5) {4};
\node[text=black, font={\scriptsize \sffamily}, xshift=0pt, yshift=0pt] at (0.25, 0.8333333333333333) {0};
\node[text=black, font={\scriptsize \sffamily}, xshift=0pt, yshift=0pt] at (0.75, 0.8333333333333333) {1};
\end{tikzpicture}
	\quad \quad \quad
	\begin{tikzpicture}[xscale=3, yscale=3, baseline={([yshift=-.5ex]current bounding box.center)}]
\path[fill=gray!10] (0, 0) rectangle (1, 1);
\draw[color=black, opacity=1] (0.6666666666666666, 0.6666666666666666) .. controls (0.6666666666666666, 0.6666666666666666) and (0.6666666666666666, 0.7222222222222222) .. (0.6666666666666666, 0.8333333333333334);
\draw[color=black, opacity=1] (0.6666666666666666, 1) .. controls (0.6666666666666666, 1.0) and (0.6666666666666666, 0.9444444444444444) .. (0.6666666666666666, 0.8333333333333334);
\draw[color=black, opacity=1] (0.3333333333333333, 0.3333333333333333) .. controls (0.4444444444444444, 0.3333333333333333) and (0.5, 0.38888888888888884) .. (0.5, 0.5);
\draw[color=black, opacity=1] (0.6666666666666666, 0.6666666666666666) .. controls (0.5555555555555556, 0.6666666666666666) and (0.5, 0.611111111111111) .. (0.5, 0.5);
\draw[color=black, opacity=1] (0.3333333333333333, 0.3333333333333333) .. controls (0.2777777777777778, 0.3333333333333333) and (0.25, 0.4722222222222222) .. (0.25, 0.75);
\draw[color=black, opacity=1] (0.25, 1) .. controls (0.25, 1.0) and (0.25, 0.9166666666666666) .. (0.25, 0.75);
\draw[color=black, opacity=1] (0.6666666666666666, 0.6666666666666666) .. controls (0.7222222222222222, 0.6666666666666666) and (0.75, 0.5277777777777778) .. (0.75, 0.25);
\draw[color=black, opacity=1] (0.75, 0) .. controls (0.75, 0.0) and (0.75, 0.08333333333333333) .. (0.75, 0.25);
\draw[color=black, opacity=1] (0.3333333333333333, 0.3333333333333333) .. controls (0.3333333333333333, 0.3333333333333333) and (0.3333333333333333, 0.2777777777777778) .. (0.3333333333333333, 0.16666666666666666);
\draw[color=black, opacity=1] (0.3333333333333333, 0) .. controls (0.3333333333333333, 0.0) and (0.3333333333333333, 0.05555555555555555) .. (0.3333333333333333, 0.16666666666666666);
\node[circle, fill=black, draw=black, inner sep=1pt] at (0.3333333333333333, 0.3333333333333333) {};
\node[circle, fill=black, draw=black, inner sep=1pt] at (0.6666666666666666, 0.6666666666666666) {};
\node[text=magenta, font={\scriptsize \sffamily}, xshift=4pt, yshift=4pt] at (0.6666666666666666, 0.8333333333333334) {4};
\node[text=magenta, font={\scriptsize \sffamily}, xshift=4pt, yshift=4pt] at (0.5, 0.5) {3};
\node[text=magenta, font={\scriptsize \sffamily}, xshift=4pt, yshift=4pt] at (0.25, 0.75) {2};
\node[text=magenta, font={\scriptsize \sffamily}, xshift=4pt, yshift=4pt] at (0.75, 0.25) {1};
\node[text=magenta, font={\scriptsize \sffamily}, xshift=4pt, yshift=4pt] at (0.3333333333333333, 0.16666666666666666) {0};
\node[text=magenta, font={\scriptsize \sffamily}, xshift=4pt, yshift=4pt] at (0.3333333333333333, 0.3333333333333333) {0};
\node[text=magenta, font={\scriptsize \sffamily}, xshift=4pt, yshift=4pt] at (0.6666666666666666, 0.6666666666666666) {1};
\end{tikzpicture}
\end{center}
\end{exm}

\begin{dfn}[Regular molecules] \label{dfn:molecules}
The class of regular molecules is generated by the following clauses.
\begin{itemize}
\item (Point). The terminal oriented graded poset $\bullet$ is a regular molecule.
\item (Atom). Let $U, V$ be \emph{round} regular molecules such that $\dmn{U} = \dmn{V}$ and, for all $\alpha \in \{+, -\}$, $\bord{}{\alpha}U$ is isomorphic to $\bord{}{\alpha}V$.
	Then $U \celto V$ is a regular molecule, where $U \celto V$ is the essentially unique oriented graded poset $U \celto V$ with the property that
	\begin{enumerate}
	\item $U \celto V$ has a greatest element, and
	\item $\bord{}{-}(U \celto V)$ is isomorphic to $U$, while $\bord{}{+}(U \celto V)$ is isomorphic to $V$.
	\end{enumerate}
\item (Paste). Let $U, V$ be regular molecules and $k < \min (\dmn{U}, \dmn{V})$, such that $\bord{k}{+}U$ is isomorphic to $\bord{k}{-}V$.
Then the pushout $U \cp{k} V$ of the span $\bord{k}{+}U \incl U$, $\bord{k}{+}U \incliso \bord{k}{-}V \incl V$ is a regular molecule.
\end{itemize}
A regular molecule is an \emph{atom} if it has a greatest element; these are precisely the molecules whose final generating clause is (Point) or (Atom).

The \emph{submolecule} relation $U \submol V$ is the preorder generated by $U, V \submol U \celto V$ and $U, V \submol U \cp{k} V$.
\end{dfn}

\begin{comm}
The properties of regular molecules are explored in \cite[Sections 1, 2]{hadzihasanovic2020diagrammatic}.
Importantly, the following results ensure that \S \ref{dfn:molecules} is a valid definition:
\begin{enumerate}
	\item the category $\ogpos$ has pushouts of inclusions;
	\item if $U$ and $V$ are isomorphic regular molecules, they are isomorphic \emph{in a unique way};
	\item input and output boundaries of regular molecules are regular molecules;
	\item if $U$ and $V$ are round, then a pair of isomorphisms between $\bord{}{\alpha}U$ and $\bord{}{\alpha}V$ for $\alpha \in \{+, -\}$ extends uniquely to an isomorphism between $\bord{}{}U$ and $\bord{}{}V$.
\end{enumerate}
The first three imply that $U \cp{k} V$ is well-defined and does not depend on a choice of isomorphism between $\bord{k}{+}U$ and $\bord{k}{-}V$. 
The fourth implies that $U \celto V$ can be uniquely constructed by extending the isomorphisms $\bord{}{\alpha}U \incliso \bord{}{\alpha}V$ to an isomorphism $\bord U \incliso \bord V$, then gluing $U$ and $V$ along this isomorphism, and finally adding a greatest element with the appropriate orientation.
\end{comm}

\begin{exm}
Let $\data{arrow} \eqdef (\bullet \celto \bullet)$ and $\data{binary} \eqdef ((\data{arrow} \cp{0} \data{arrow}) \celto \data{arrow})$. 
The shape of the diagram of Example \ref{exm:whisker} is generated as $\data{binary} \cp{0} \data{arrow}$,
while the oriented graded poset of Example \ref{exm:frob} is generated as $(\data{cobinary} \cp{0} \data{arrow}) \cp{1} (\data{arrow} \cp{0} \data{binary})$, where $\data{cobinary} \eqdef (\data{arrow} \celto (\data{arrow} \cp{0} \data{arrow}))$.
\end{exm}

\begin{rmk}
As discussed in \cite[\S 2.1]{hadzihasanovic2020diagrammatic}, the pasting constructions $-\cp{k}-$ satisfy the equations of composition in strict $\omega$\nbd categories \emph{up to unique isomorphism}. 
It follows that the ``same'' regular molecule may be constructed in different ways.
For example, letting $\data{globe} \eqdef (\data{arrow} \celto \data{arrow})$, we have
\begin{center}
	$(\data{globe} \cp{0} \data{arrow}) \cp{1} (\data{arrow} \cp{0} \data{globe}) 
	\; \simeq \;
	\data{globe} \cp{0} \data{globe}
	\; \simeq \;
	(\data{arrow} \cp{0} \data{globe}) \cp{1} (\data{globe} \cp{0} \data{arrow}).$

	\begin{tikzpicture}[xscale=4, yscale=4, baseline={([yshift=-.5ex]current bounding box.center)}]
\path[fill=white] (0, 0) rectangle (1, 1);
\draw[->, draw=magenta] (0.1625, 0.19999999999999998) -- (0.12916666666666665, 0.4666666666666667);
\draw[->, draw=magenta] (0.2125, 0.19999999999999998) -- (0.5791666666666667, 0.4666666666666667);
\draw[->, draw=magenta] (0.4875, 0.19999999999999998) -- (0.3875, 0.4666666666666667);
\draw[->, draw=magenta] (0.5375, 0.19999999999999998) -- (0.8375, 0.4666666666666667);
\draw[->, draw=blue] (0.16249999999999998, 0.4666666666666667) -- (0.4625, 0.19999999999999998);
\draw[->, draw=magenta] (0.1375, 0.5333333333333333) -- (0.2375, 0.7999999999999999);
\draw[->, draw=blue] (0.42083333333333334, 0.4666666666666667) -- (0.7875, 0.19999999999999998);
\draw[->, draw=magenta] (0.4125, 0.5333333333333333) -- (0.7125, 0.7999999999999999);
\draw[->, draw=blue] (0.6125, 0.4666666666666667) -- (0.5125, 0.19999999999999998);
\draw[->, draw=blue] (0.8708333333333333, 0.4666666666666667) -- (0.8374999999999999, 0.19999999999999998);
\draw[->, draw=blue] (0.2875, 0.7999999999999999) -- (0.5875, 0.5333333333333333);
\draw[->, draw=blue] (0.7625, 0.7999999999999999) -- (0.8625, 0.5333333333333333);
\node[text=black, font={\scriptsize \sffamily}, xshift=0pt, yshift=0pt] at (0.16666666666666666, 0.16666666666666666) {0};
\node[text=black, font={\scriptsize \sffamily}, xshift=0pt, yshift=0pt] at (0.5, 0.16666666666666666) {1};
\node[text=black, font={\scriptsize \sffamily}, xshift=0pt, yshift=0pt] at (0.8333333333333333, 0.16666666666666666) {2};
\node[text=black, font={\scriptsize \sffamily}, xshift=0pt, yshift=0pt] at (0.125, 0.5) {0};
\node[text=black, font={\scriptsize \sffamily}, xshift=0pt, yshift=0pt] at (0.375, 0.5) {1};
\node[text=black, font={\scriptsize \sffamily}, xshift=0pt, yshift=0pt] at (0.625, 0.5) {2};
\node[text=black, font={\scriptsize \sffamily}, xshift=0pt, yshift=0pt] at (0.875, 0.5) {3};
\node[text=black, font={\scriptsize \sffamily}, xshift=0pt, yshift=0pt] at (0.25, 0.8333333333333333) {0};
\node[text=black, font={\scriptsize \sffamily}, xshift=0pt, yshift=0pt] at (0.75, 0.8333333333333333) {1};
\end{tikzpicture}
	\quad \quad \quad
	\begin{tikzpicture}[xscale=3, yscale=3, baseline={([yshift=-.5ex]current bounding box.center)}]
\path[fill=gray!10] (0, 0) rectangle (1, 1);
\draw[color=black, opacity=1] (0.6666666666666666, 0.5) .. controls (0.6666666666666666, 0.5) and (0.6666666666666666, 0.5833333333333333) .. (0.6666666666666666, 0.75);
\draw[color=black, opacity=1] (0.6666666666666666, 1) .. controls (0.6666666666666666, 1.0) and (0.6666666666666666, 0.9166666666666666) .. (0.6666666666666666, 0.75);
\draw[color=black, opacity=1] (0.3333333333333333, 0.5) .. controls (0.3333333333333333, 0.5) and (0.3333333333333333, 0.5833333333333333) .. (0.3333333333333333, 0.75);
\draw[color=black, opacity=1] (0.3333333333333333, 1) .. controls (0.3333333333333333, 1.0) and (0.3333333333333333, 0.9166666666666666) .. (0.3333333333333333, 0.75);
\draw[color=black, opacity=1] (0.6666666666666666, 0.5) .. controls (0.6666666666666666, 0.5) and (0.6666666666666666, 0.41666666666666663) .. (0.6666666666666666, 0.25);
\draw[color=black, opacity=1] (0.6666666666666666, 0) .. controls (0.6666666666666666, 0.0) and (0.6666666666666666, 0.08333333333333333) .. (0.6666666666666666, 0.25);
\draw[color=black, opacity=1] (0.3333333333333333, 0.5) .. controls (0.3333333333333333, 0.5) and (0.3333333333333333, 0.41666666666666663) .. (0.3333333333333333, 0.25);
\draw[color=black, opacity=1] (0.3333333333333333, 0) .. controls (0.3333333333333333, 0.0) and (0.3333333333333333, 0.08333333333333333) .. (0.3333333333333333, 0.25);
\node[circle, fill=black, draw=black, inner sep=1pt] at (0.3333333333333333, 0.5) {};
\node[circle, fill=black, draw=black, inner sep=1pt] at (0.6666666666666666, 0.5) {};
\node[text=magenta, font={\scriptsize \sffamily}, xshift=4pt, yshift=4pt] at (0.6666666666666666, 0.75) {3};
\node[text=magenta, font={\scriptsize \sffamily}, xshift=4pt, yshift=4pt] at (0.3333333333333333, 0.75) {2};
\node[text=magenta, font={\scriptsize \sffamily}, xshift=4pt, yshift=4pt] at (0.6666666666666666, 0.25) {1};
\node[text=magenta, font={\scriptsize \sffamily}, xshift=4pt, yshift=4pt] at (0.3333333333333333, 0.25) {0};
\node[text=magenta, font={\scriptsize \sffamily}, xshift=4pt, yshift=4pt] at (0.3333333333333333, 0.5) {0};
\node[text=magenta, font={\scriptsize \sffamily}, xshift=4pt, yshift=4pt] at (0.6666666666666666, 0.5) {1};
\end{tikzpicture}
\end{center}
\end{rmk}

\begin{impl}
We want to implement regular molecules as a subtype $\data{Shape}$ of $\data{OgPoset}$ with a nullary constructor $\data{point}$ and partial binary constructors $\data{atom}(-, -)$ and $\data{paste}_k(-, -)$ for $k \in \mathbb{N}$. In order to implement the constructors, we need to be able to perform the following operations:
\begin{enumerate}
\item compute input and output $k$\nbd boundaries;
\item check if a closed subset is round;
\item determine if two regular molecules are isomorphic;
\item compute the pushout of a span of inclusions.
\end{enumerate}
The first, second, and fourth of these admit straightforward algorithms of low-degree polynomial time complexity, that do not rely on any special properties of regular molecules.
\begin{codecomm}
The class \code{rewalt.ogposets.Closed} has an attribute \code{isround} which computes and returns whether a closed subset of an oriented graded poset is round. 

The class \code{rewalt.ogposets.OgMapPair} is an overlay for pairs of \code{OgMap}s, with a method \code{pushout} returning the pushout cospan of a span of total \code{OgMap}s, when well-defined.
\end{codecomm}
The third problem, however, is non-trivial. Indeed, the isomorphism problem generalised to all oriented graded posets is equivalent to the \emph{graph isomorphism} ($\data{GI}$) problem, which is not known to be in $\data{P}$; the best known algorithm, due to Babai, runs in quasipolynomial time \cite{babai2016graph}.

\begin{rmk}
As customary in this context, a graph is a \emph{simple} graph (no loops or multiple edges).
\end{rmk}

\begin{prop} \label{prop:gicomplete}
The isomorphism problem for oriented graded posets is $\data{GI}$\nbd complete.
\end{prop}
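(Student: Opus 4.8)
I will treat the two halves of $\data{GI}$-completeness separately: $\data{GI}$-hardness of the oriented-graded-poset isomorphism problem, and its polynomial-time many-one reducibility to $\data{GI}$. Both rest on a preliminary observation, which I would record first: \emph{an isomorphism of oriented graded posets is exactly an isomorphism of the underlying posets that preserves the orientation labelling of the Hasse diagram}. One inclusion is immediate — each operator $\bord{n}{\alpha}$ is defined purely from the partial order and the orientation, so an orientation-preserving order isomorphism commutes with all of them (and so does its inverse), hence is an isomorphism in $\ogpos$. For the converse, note that from the boundary operators one recovers the dimension of $x$ (the least $n$ with $\bord{n}{\alpha}x = \clos{\{x\}}$ for both $\alpha$) and the oriented covering relation ($y$ is covered by $x$ with sign $\alpha$ iff $\dmn{y} = \dmn{x}-1$ and $y \in \bord{\dmn{x}-1}{\alpha}x$), so any $\ogpos$-isomorphism preserves these data and is therefore an orientation-preserving order isomorphism.

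\textbf{$\data{GI}$-hardness.} Given a simple graph $G = (V,E)$, I build an oriented graded poset $P(G)$ of dimension $\leq 1$: the $0$-dimensional elements are the vertices, the $1$-dimensional elements are the edges, each edge $\{u,v\}$ covers exactly $u$ and $v$, and every arc of the Hasse diagram is labelled $-$. As $G$ has no loops or parallel edges this is a well-defined finite graded poset, computable from $G$ in linear time. By the observation, an isomorphism $P(G) \cong P(G')$ is an order isomorphism respecting the constant orientation; since order isomorphisms preserve dimension it restricts to bijections $V \to V'$ and $E \to E'$, and being order-preserving it carries each edge-element to one incident to the images of its two vertex-elements — i.e.\ it is a graph isomorphism $G \cong G'$. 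Conversely a graph isomorphism induces such an order isomorphism, and the constant orientation is preserved automatically. Hence $G \cong G' \iff P(G) \cong P(G')$; in particular the problem is already $\data{GI}$-hard for oriented graded posets of dimension $\leq 1$.

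\textbf{Reduction to $\data{GI}$.} Given an oriented graded poset $P$, I encode it as a vertex- and edge-coloured simple graph: vertices are the elements of $P$ coloured by their dimension, edges are the arcs of the Hasse diagram coloured by their orientation in $\{+,-\}$. This is polynomial-time computable, and an $\ogpos$-isomorphism gives a colour-preserving graph isomorphism. Conversely, a colour-preserving isomorphism preserves dimensions and coloured Hasse arcs; since $P$ is graded, each Hasse arc joins elements of consecutive dimensions, so the endpoint colours determine which is covered by which, whence the covering relation with its orientation is preserved. As the order of a finite poset is the transitive closure of the covering relation, this is an orientation-preserving order isomorphism, hence an $\ogpos$-isomorphism. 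Therefore $P$ and $P'$ are isomorphic iff their coloured-graph encodings are, and isomorphism of vertex- and edge-coloured simple graphs reduces to $\data{GI}$ by the standard gadget constructions (rigid, pairwise non-isomorphic gadgets attached to encode vertex colours; subdivision of each edge, with the new vertex carrying the edge colour), which finishes the proof.

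\textbf{Main obstacle.} The two reductions are short once the preliminary observation is in place; that observation — the coincidence of $\ogpos$-isomorphisms with orientation-preserving poset isomorphisms — is the only delicate point, since without it neither the faithfulness of $P(-)$ in the hardness direction nor that of the coloured-graph encoding in the membership direction would be clear. The colour-removal step is classical and I would merely cite it.
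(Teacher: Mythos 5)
Your proof is correct, and it follows the same overall skeleton as the paper's — encode graphs as $1$\nbd dimensional oriented graded posets for hardness, and encode oriented graded posets as labelled Hasse diagrams for membership in $\data{GI}$ — but the details of each half differ in ways worth noting. For $\data{GI}$\nbd hardness, the paper reduces from \emph{directed} graph isomorphism by turning a digraph into the ``oriented incidence poset'' where the source of each edge is its input face and the target its output face; you instead reduce directly from (undirected, simple) graph isomorphism by making both endpoints input faces and using the constant labelling $-$. Both are linear\nbd time reductions to $1$\nbd dimensional oriented graded posets, and both are correct. For membership, the paper passes to the Hasse diagram viewed as a directed graph with $\{+,-\}$\nbd labelled edges and invokes $\data{GI}$\nbd completeness of isomorphism for finite relational structures (citing Miller), which absorbs the colour\nbd stripping step into a black box; you instead build an explicit vertex\nbd and edge\nbd coloured \emph{undirected} graph (dimensions as vertex colours replacing the arc direction) and appeal to the standard gadget constructions for removing colours. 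Your version requires the small extra observation that in a graded poset each covering pair has consecutive dimensions, so the vertex colours recover the direction of each Hasse arc; you supply this, so the argument is complete. The one thing you make explicit that the paper leaves tacit is the preliminary lemma identifying $\ogpos$\nbd isomorphisms with orientation\nbd preserving poset isomorphisms, including the recovery of dimension and the oriented covering relation from the boundary operators; the paper asserts the equivalence with labelled Hasse diagrams without proof. Spelling this out is a genuine improvement in self\nbd containedness, since the faithfulness of both encodings rests on exactly that fact.
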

\begin{proof}
Deciding isomorphism of oriented graded posets is equivalent to deciding isomorphism of their Hasse diagrams with $\{+, -\}$\nbd labelled edges.
The isomorphism problem for edge-labelled finite graphs is an instance of the isomorphism problem for finite relational structures, which is $\data{GI}$\nbd complete \cite{miller1979graph}.

Conversely, a directed graph can be represented by its ``oriented incidence poset'': the 0\nbd dimensional elements are the vertices, the 1\nbd dimensional elements are the edges, the only input face of an edge is its source, and the only output face of an edge is its target.
Two directed graphs are isomorphic if and only if their oriented incidence posets are isomorphic.
Since $\data{GI}$ reduces to the isomorphism problem for directed graphs, it reduces to the isomorphism problem for 1\nbd dimensional oriented graded posets.
\end{proof}

Nevertheless, in the special case of regular molecules, we can do much better. 
Our strategy is to describe a deterministic \emph{traversal algorithm}, where the traversal order depends only on the intrinsic structure of a regular molecule as an oriented graded poset and not on its representation.

Given $U, V: \data{OgPoset}$ representing regular molecules, we traverse both $U$ and $V$, and then \emph{reorder} their elements in each dimension according to their traversal order.
If $U', V': \data{OgPoset}$ are the reordered versions of $U, V$, we then have
\begin{center}
	$U \simeq V$ if and only if $U' \equiv V'$.
\end{center}
We will show that, with this strategy, we can solve the isomorphism problem for regular molecules in time $O(n^3 \log n)$. A more precise upper bound is given in Theorem \ref{thm:complexity} below.

In addition to solving the isomorphism problem for regular molecules, the traversal order gives us a canonical form for regular molecules in $\data{OgPoset}$ form.
If we implement the constructors of $\data{Shape}$ in such a way that they always produce an $\data{OgPoset}$ in traversal order, we obtain that
\begin{center}
	 for all $U, V: \data{Shape}$, $U \simeq V$ if and only if $U \equiv V$,
\end{center}
that is, we have a \emph{unique representation} for shapes of diagrams.

\begin{codecomm}
The class \code{rewalt.shapes.Shape} has constructors \code{point()}, \code{atom(u, v)}, and \code{paste(u, v, [k])}; default is $\code{k} \eqdef \min (\dmn{U}, \dmn{V}) - 1$.
\end{codecomm}

\begin{figure}[t!] 
\begin{algor} \small
\begin{algorithmic}[5]
\Procedure {Traverse} {$U:$ regular molecule}
\State $\data{marked} \gets []$
\State $\data{stack} \gets [U]$
\While {$\data{stack}$ is not empty}
	\State $\data{focus} \gets$ top of $\data{stack}$
	\State $\data{dim} \gets \dmn{\data{focus}}$
	\If {$\data{focus} \subseteq \data{marked}$}
		\State pop $\data{focus}$ from top of $\data{stack}$
	\Else
		\If {$\bord{}{-}\data{focus} \not\subseteq \data{marked}$}
			\State push $\bord{}{-}\data{focus}$ to top of $\data{stack}$
		\Else
			\If {$\data{focus} = \clos\{x\}$ for some $x$}
				\State append $x$ to $\data{marked}$
				\State pop $\data{focus}$ from top of $\data{stack}$
				\If {$\bord{}{+}\data{focus} \not\subseteq \data{marked}$}
					\State push $\bord{}{+}\data{focus}$ on top of $\data{stack}$
				\EndIf
			\Else
				\State $y \gets$ first item of dimension $\data{dim} - 1$ in $\data{marked}$ such that
				\State $\quad \quad y$ has an unmarked input coface in $\data{focus}$
				\State $x \gets$ unique input coface of $y$ in $\data{focus}$
				\State push $\clos\{x\}$ on top of stack
			\EndIf				
		\EndIf
	\EndIf
\EndWhile
\State \Return $\data{marked}$
\EndProcedure
\end{algorithmic}
\end{algor}
\caption{The traversal algorithm.} \label{fig:traversal}
\end{figure}

The algorithm is described in Figure \ref{fig:traversal}. At each iteration of the main loop (line 4), the current state is fully described by the \emph{stack} -- including its top element, the \emph{focus} -- and by the list of \emph{marked} elements.
\end{impl}

\begin{lem} \label{lem:stackmolecule}
Let $V$ be an item on the stack. Then $V$ is a regular molecule. If $W$ is below $V$ on the stack, then $V$ is a proper subset of $W$.
\end{lem}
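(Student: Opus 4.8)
The plan is to prove both assertions simultaneously as an invariant of the main loop: at the start of every iteration of the main loop (line~4), \textbf{(i)} every item on $\data{stack}$ is a regular molecule, and \textbf{(ii)} whenever $W$ lies below $V$ on $\data{stack}$ we have $V \subsetneq W$. Since the stack is only ever modified inside the loop body, it suffices to verify the base case and that every stack-modifying step preserves \textbf{(i)} and \textbf{(ii)}. The base case is immediate: initially $\data{stack} = [U]$, which is a regular molecule by hypothesis, and \textbf{(ii)} is vacuous for a one-element stack.

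For the inductive step I would go through the branches that alter the stack. Popping the focus --- either when $\data{focus} \subseteq \data{marked}$ or, in the atom case, after $x$ has been appended to $\data{marked}$ --- merely shrinks the stack, so preserves \textbf{(i)} and \textbf{(ii)}. Each push places $\bord{}{-}\data{focus}$, $\bord{}{+}\data{focus}$, or $\clos\{x\}$ on the stack, and in every case $\data{focus}$ is (or, in the atom case, was just) the top item of the stack, hence a regular molecule by \textbf{(i)}. For the first two, item~3 of the Comment following Definition~\ref{dfn:molecules} says that boundaries of regular molecules are regular molecules. For the third, $x$ is by construction an input coface in $\data{focus}$ of a marked element, so $x \in \data{focus}$, and the closure of an element of a regular molecule is an atom, hence a regular molecule (a standard property of regular molecules, \cite{hadzihasanovic2020diagrammatic}). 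This establishes \textbf{(i)}.

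For \textbf{(ii)} it suffices to check that in each push the new top is a \emph{proper} subset of the item it is placed on; the rest then follows from the inductive hypothesis and transitivity of $\subsetneq$. A boundary $\bord{}{\alpha}\data{focus}$ is always contained in $\data{focus}$, and it is pushed only when the branch guard $\bord{}{\alpha}\data{focus} \not\subseteq \data{marked}$ holds, which forces it to be nonempty and hence $\dmn{\data{focus}} \geq 1$; as $\bord{}{\alpha}\data{focus}$ then has dimension strictly less than $\dmn{\data{focus}}$, it misses the top-dimensional elements of $\data{focus}$, so the inclusion is proper. For the $\clos\{x\}$ push, $\clos\{x\} \subseteq \data{focus}$ since $x \in \data{focus}$ and $\data{focus}$ is closed, and the inclusion is proper because this branch is only reached when $\data{focus} \neq \clos\{z\}$ for every $z$, in particular when $\data{focus} \neq \clos\{x\}$. (In the $\bord{}{+}\data{focus}$ case $\data{focus}$ has already been popped, so ``the item it is placed on'' means the item that was directly below $\data{focus}$ --- which by the inductive hypothesis properly contains $\data{focus}$, hence $\bord{}{+}\data{focus}$ --- or there is no such item, and \textbf{(ii)} holds trivially for the resulting one-element stack.) Assembling the cases completes the induction.

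The one place needing care is exactly this strict-inclusion bookkeeping: the proof uses that the branch guards force a pushed boundary to be nonempty --- so that its lower dimension really does yield a \emph{proper} inclusion --- and that being in the non-atom branch is precisely what rules out $\clos\{x\} = \data{focus}$. Everything else is an appeal to two facts already in hand: boundaries of regular molecules are regular molecules, and the closure of any element of a regular molecule is an atom.
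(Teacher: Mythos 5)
Correct, and essentially the same argument as the paper's: induction on the main-loop iterations, checking that every push places a regular molecule (via closure under boundaries and closures of elements) that is a proper subset of the item it now rests on. You are slightly more explicit than the paper about the fact that $\bord{}{+}\data{focus}$ is pushed only after $\data{focus}$ has been popped and about why the guards force properness, but the substance is identical.
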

\begin{proof}
Initially, the stack only contains $U$, which is a regular molecule by assumption.
Assume, inductively, that the statement is true at the beginning of the current iteration with focus $V$, and that a set $V'$ is pushed onto the stack at the end. Then either
\begin{enumerate}
	\item $V' = \bord{}{\alpha}V$ for some $\alpha \in \{+, -\}$, or
	\item $V' = \clos\{x\}$ for some $x \in V$.
\end{enumerate}
In both cases, $V'$ is a regular molecule and a proper subset of $V$ (hence also of each item below $V$), under the assumption that $V$ is a regular molecule.
\end{proof}

\begin{rmk} \label{rmk:stackclassification}
In fact, any $V$ that appears on the stack is either $\bord{k}{-}U$, which we call ``$U$-linked'', or it is $\clos\{x\}$ or $\bord{k}{\alpha}x$, which we call ``$x$-linked'', for some $x \in U$. In the latter case, $V$ is \emph{round}, which implies that it is also \emph{pure} \cite[Lemma 1.35]{hadzihasanovic2020diagrammatic}: its maximal elements all have the same dimension.
\end{rmk}

\begin{lem} \label{lem:fullymarked}
Suppose $V$ is on the stack. Then all elements of $V$ must be marked before any item below $V$ is accessed, or before any proper superset of $V$ becomes the focus.
\end{lem}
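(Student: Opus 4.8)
The plan is to argue by strong induction on the number of elements of $V$, and everything reduces to two easy observations about the stack dynamics. The first is that \emph{while $V$ is on the stack, the focus is always a subset of $V$}: at any such moment the focus is either $V$ itself or an item lying above $V$, and by Lemma~\ref{lem:stackmolecule} every item above $V$ on the stack is a proper subset of $V$. The second is that, by the LIFO discipline, \emph{no item below $V$ can be the focus as long as $V$ is on the stack}; and again by Lemma~\ref{lem:stackmolecule} such an item is a proper superset of $V$, so the two kinds of event in the statement (reading ``accessed'' as ``becomes the focus'', the only way the algorithm inspects a stack item) are both instances of ``a proper superset of $V$ becomes the focus''. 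Let $t_1$ be the first time, after the moment under consideration, at which such an event occurs; if there is no such $t_1$ there is nothing to prove, so assume there is. By the two observations, $t_1$ occurs strictly after $V$ is popped off the stack, and the items sitting below $V$ do not change until then.

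Now $V$ is popped either when it satisfies $\data{focus} \subseteq \data{marked}$ — in which case all of $V$ is marked already, hence before $t_1$, and we are done — or in the branch where $V = \clos\{x\}$ is an atom with greatest element $x$. In the latter case, reaching that branch forces the input boundary $\bord{}{-}V = \bord{}{-}x$ to be already marked, and the algorithm then marks $x$ itself. Since $V$ is an atom, $V = \{x\} \cup \bord{}{-}x \cup \bord{}{+}x$, so the only elements of $V$ that might still be unmarked are those of $\bord{}{+}x$. If $\bord{}{+}x$ is already marked, we are again done; otherwise the algorithm pushes $\bord{}{+}V = \bord{}{+}x$ onto the stack, directly above the item $W$ that was sitting below $V$, and it remains to show that $\bord{}{+}x$ is fully marked before $t_1$.

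This is the only place the induction is needed. The molecule $\bord{}{+}x$ is a regular molecule with strictly fewer elements than $V = \clos\{x\}$ (it omits $x$), so the inductive hypothesis applies to it at the moment it is pushed. The items below $\bord{}{+}x$ then are exactly the items that were below $V$, each a proper superset of $V$ and hence of $\bord{}{+}x$; so, applying the two observations to $\bord{}{+}x$ as well, one checks that the first moment $W$ becomes the focus is simultaneously the first ``bad event'' for $V$ — so it is $t_1$ — and the first ``bad event'' for $\bord{}{+}x$. The inductive hypothesis therefore says $\bord{}{+}x$ is entirely marked before $t_1$; combined with $\{x\}$ and $\bord{}{-}x$ being marked already at the pop and the fact that $\data{marked}$ only grows, all of $V = \{x\} \cup \bord{}{-}x \cup \bord{}{+}x$ is marked before $t_1$. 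The base case $V = \bullet$ is immediate.

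The one real obstacle I anticipate is precisely this last step. The algorithm genuinely pops an atom $V = \clos\{x\}$ off the stack \emph{before} its closure is fully marked, so the tempting claim ``$V$ leaves the stack only once $V \subseteq \data{marked}$'' is false; instead one must notice that the leftover work is handed off to the boundary $\bord{}{+}x$ pushed in $V$'s place, and that the moment at which $V$ ``escapes upward'' coincides with the one for $\bord{}{+}x$, so an induction on size closes the gap. Everything else — the reduction to $t_1$ and the bookkeeping about which items lie above or below $V$ — follows immediately from the LIFO structure and Lemma~\ref{lem:stackmolecule}.
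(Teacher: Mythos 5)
Your proof is correct, but it follows a genuinely different route from the paper's. The paper does not use an induction on the size of $V$. Instead it analyses directly how $V$ can be popped (either already fully marked, or an atom $\clos\{x\}$ whose greatest element is marked at that iteration), and --- in the atom case, when $\bord{}{+}V$ gets pushed in $V$'s place --- it repeats this case analysis exactly \emph{once} and then invokes the globularity property $\bord{}{\alpha}(\bord{}{+}V) = \bord{}{\alpha}(\bord{}{-}V) \subseteq \bord{}{-}V$ of regular molecules to terminate: after the greatest element of $\bord{}{+}V$ is marked, all of $\bord(\bord{}{+}V)$ already lies inside $\bord{}{-}V$, which was fully marked before $V$ was popped, so $\bord{}{+}V$ becomes fully marked and nothing further is pushed. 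You avoid globularity entirely by running a well-founded induction on $|V|$, using $|\bord{}{+}x| < |\clos\{x\}|$ and the key observation --- which the paper leaves implicit --- that the first ``bad event'' of the lemma for $\bord{}{+}x$ (pushed in $V$'s place) coincides with the first bad event $t_1$ for $V$, namely the first time $W$ below them becomes the focus, so the inductive hypothesis applies directly. Both arguments are sound. The paper's proof buys a constant unrolling depth and a more explicit picture of what the stack dynamics look like (only one extra boundary gets pushed before the ``hand-off'' resolves), at the cost of invoking a structural fact about regular molecules; your proof is more elementary in that it relies only on the stack discipline and Lemma~\ref{lem:stackmolecule}, at the cost of tracking the coincidence of bad events across the induction.
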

\begin{proof}
By Lemma \ref{lem:stackmolecule}, as long as $V$ is on the stack, only $V$ and its proper subsets can be on top.
It follows that, for a proper superset of $V$ to be the focus, $V$ must be popped from the stack at the end of an iteration where $V$ is the focus.
There are only two ways this can happen:
\begin{itemize}
	\item $V$ was already fully marked before the current loop iteration, or
	\item $\bord{}{-}V$ was fully marked and $V = \clos\{x\}$ for some $x$ which is marked at the current loop iteration.
\end{itemize}
In both cases, $\bord{}{-}V$ was already fully marked before the current loop iteration.
In the latter case, if $\bord{}{+}V$ is already fully marked, then $V = \{x\} \cup \bord{}{-}V \cup \bord{}{+}V$ is also fully marked. 
Otherwise, $\bord{}{+}V \subsetneq V$ gets pushed onto the stack to replace $V$, and must be popped before any superset of $V$ becomes the focus.
By the same case distinction, whenever $\bord{}{+}V$ is popped, either
\begin{itemize}
	\item it was fully marked, in which case $V$ was fully marked, or
	\item it is of the form $\clos\{y\}$ for some $y$ which is marked at the current loop iteration.
\end{itemize}
Either way, since all regular molecules satisfy the \emph{globularity} property $\bord{}{\alpha}(\bord{}{+}V) = \bord{}{\alpha}(\bord{}{-}V) \subseteq \bord{}{-}V$, we know that $\bord{}{+}V$, hence $V$, is fully marked at the end of the iteration, and nothing is added to the stack.
\end{proof}

\begin{lem} \label{lem:norepetitions}
Any subset $V$ of $U$ can be pushed onto the stack at most once.
\end{lem}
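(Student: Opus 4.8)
The plan is to combine two facts. \emph{Fact 1: the algorithm never pushes onto the stack a set that is already fully marked.} For the pushes of $\bord{}{-}\data{focus}$ and $\bord{}{+}\data{focus}$ this is the literal guard ``$\not\subseteq \data{marked}$'' in the pseudocode; for the push of $\clos\{x\}$ in the last branch it holds because $x$ is chosen as an unmarked input coface, so $x \notin \data{marked}$ and hence $\clos\{x\} \not\subseteq \data{marked}$; and the initial push of $U$ happens with $\data{marked}$ empty and $U$ nonempty. Since $\data{marked}$ only ever grows, Fact 1 already implies that $V$ can be pushed only at times strictly before $V$ first becomes fully marked. \emph{Fact 2: by Lemma \ref{lem:fullymarked}, once $V$ is pushed it must become fully marked before any item below it on the stack is accessed.}

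To conclude, suppose $V$ is ever pushed, and let $t_1$ be the first time this happens. Right after $t_1$, $V$ is on top of the stack. By Fact 2 no item below $V$ at that moment is accessed until $V$ is fully marked, so by Lemma \ref{lem:stackmolecule} every set that is the focus in the interval during which $V$ is not yet fully marked is either $V$ itself or a proper subset of $V$. Now, every set pushed onto the stack in that interval is a \emph{proper} subset of $V$: for $\bord{}{-}\data{focus}$ and $\bord{}{+}\data{focus}$ because the boundary of a regular molecule has strictly smaller dimension than the molecule, so these are proper subsets of $\data{focus} \subseteq V$ (when $\bord{}{+}\data{focus}$ is pushed, $\data{focus}$ still names the set just popped, which was a subset of $V$); and for $\clos\{x\}$ with $x \in \data{focus}$ because that branch fires only when $\data{focus}$ has no greatest element, so $\clos\{x\} \subsetneq \data{focus} \subseteq V$. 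Hence nothing equal to $V$ is pushed between $t_1$ and the first full marking of $V$; by Fact 1 nothing is pushed afterwards either. So $V$ is pushed at most once.

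The routine part is the dimension bookkeeping behind $\bord{}{-}\data{focus}, \bord{}{+}\data{focus} \subsetneq \data{focus}$ and $\clos\{x\} \subsetneq \data{focus}$. The one subtlety is that $\bord{}{+}\data{focus}$ is pushed within the same loop iteration in which $\data{focus}$ is popped, so one must check that the invariant ``the relevant focus is a subset of $V$'' still applies to that just-popped set — which it does, since popping does not disturb $V$ and the invariant held at the start of that iteration. I expect Lemma \ref{lem:fullymarked} to do essentially all the work.
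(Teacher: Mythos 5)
Your proposal is correct and follows essentially the same route as the paper's proof: once $V$ is pushed, Lemma~\ref{lem:stackmolecule} shows only proper subsets of $V$ can be pushed while $V$ is ``active'', and Lemma~\ref{lem:fullymarked} together with the fully-marked guards prevents $V$ from reappearing afterwards. Your version is if anything more careful on two minor points the paper glosses over (that the $\clos\{x\}$ branch avoids fully-marked pushes because $x$ is chosen unmarked rather than because of an explicit guard, and that the same-iteration push of $\bord{}{+}\data{focus}$ after popping $\data{focus}$ still stays inside $V$), at the small cost of some redundancy in re-deriving the ``proper subset'' invariant that Lemma~\ref{lem:stackmolecule} already gives you.
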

\begin{proof}
Suppose $V$ is pushed onto the stack. As long as $V$ is on the stack, any subsequent addition to the stack must be a proper subset of $V$, so it cannot be equal to $V$.

If $V$ is popped from the stack, by Lemma \ref{lem:fullymarked}, it must be fully marked before any item below it is accessed.
Since the algorithm checks if a set is fully marked before pushing it onto the stack, $V$ can never appear again.
\end{proof}

\begin{lem} \label{lem:topdimsuffices}
Let $V$ be the focus, $n \eqdef \dmn{V}$. Then either $V$ is fully marked, or there exists an $n$\nbd dimensional element of $V$ which is unmarked.
\end{lem}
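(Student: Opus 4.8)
The plan is to prove the contrapositive: if every $n$\nbd dimensional element of $V$ is marked, then $V$ is fully marked. First, if $V$ is an atom, then $V = \clos\{x\}$ for its greatest element $x$, which is $n$\nbd dimensional; by inspection of the algorithm together with Lemma \ref{lem:norepetitions}, $x$ is appended to $\data{marked}$ only in the iteration that pops $V$ off the stack, so $x$ is an unmarked $n$\nbd dimensional element of $V$ at the start of every iteration in which $V$ is the focus, and the conclusion holds. So assume $V$ is not an atom and write $V_n$ for its set of $n$\nbd dimensional elements.

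The first real step is to show that $\clos\{x\}$ is fully marked for every $x \in V_n$. Since $x$ is marked, there was an earlier iteration in which $\clos\{x\}$ was the focus and $x$ was appended to $\data{marked}$; at that moment $\bord{}{-}x$ was already marked, and $\bord{}{+}x$ was pushed in place of $\clos\{x\}$ (or was already marked). As $V$ is not an atom we have $\clos\{x\} \subsetneq V$, so $V$ is a proper superset of $\bord{}{+}x$; by Lemma \ref{lem:fullymarked}, $\bord{}{+}x$ is fully marked before any proper superset of it --- in particular before $V$ --- becomes the focus, hence by the current iteration. Since $\clos\{x\} = \{x\} \cup \bord{}{-}x \cup \bord{}{+}x$, it, and therefore also $\clos(V_n) = \bigcup_{x \in V_n} \clos\{x\}$, is fully marked.

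Next I would use the identity $V = \clos(V_n) \cup \bord{}{-}V$, valid for any regular molecule of dimension $n$: an element of $V$ not below any $n$\nbd dimensional element is below some maximal element $m$ of dimension $j < n$, and such an $m$, having no coface in $V$, lies in $\sbord{n-1}{-}V$ (vacuously, when $j = n-1$) or in $\maxd{j}{V}$ (when $j < n-1$), hence in the closed set $\bord{n-1}{-}V = \bord{}{-}V$. If $V$ is pure, then $V = \clos(V_n)$ is already fully marked by the previous step; so assume $V$ is not pure. By Remark \ref{rmk:stackclassification}, $V$ cannot be $x$\nbd linked (those being round, hence pure), so $V$ is $U$ itself or $\bord{k}{-}U$ with $n = k$, and $\bord{}{-}V = \bord{n-1}{-}U$; by the identity and the first step it now suffices to show $\bord{}{-}V$ is fully marked.

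The crux is that such a ``$U$\nbd linked'' set can only be placed on the stack along the descending chain $U \supsetneq \bord{\dmn{U}-1}{-}U \supsetneq \dots \supsetneq \bord{0}{-}U$ of input boundaries of $U$: any push of a boundary $\bord{m}{\alpha}x$ of a single element, of an output boundary $\bord{}{+}\data{focus}$, or of a $\clos\{w\}$ via the $y$\nbd mechanism produces an $x$\nbd linked set or an atom, which is pure, while the only remaining push, $\bord{}{-}\data{focus}$ with $\data{focus}$ itself $U$\nbd linked, produces the next link of the chain. Descending the chain, the algorithm performs only pushes (because $\data{marked}$ is still empty) until it reaches $\bord{0}{-}U$ and marks its unique element; so, by downward induction on $k$, the first iteration at which $\bord{k}{-}U$ becomes the focus has $\data{marked}$ empty, and at every later such iteration the input boundary $\bord{}{-}V$ has already been fully marked and popped (Lemma \ref{lem:fullymarked}). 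Since our hypothesis makes $\data{marked}$ nonempty, we are at a later iteration, so $\bord{}{-}V$ is marked, and with the identity and the first step $V$ is fully marked. The hard part is this last paragraph --- pinning down exactly how and when sets of the form $\bord{k}{-}U$ enter the stack; the earlier steps are routine given Lemmas \ref{lem:stackmolecule}, \ref{lem:fullymarked} and \ref{lem:norepetitions}.
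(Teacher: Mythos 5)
Your proof is correct and follows essentially the same route as the paper's, recast as a contrapositive. Both proofs rest on the same three ingredients: (i) that once $x$ is marked, $\clos\{x\}$ must be fully marked before $V$ becomes the focus (you apply Lemma~\ref{lem:fullymarked} to $\bord{}{+}x$; the paper states this directly and deduces that $V$ is fully marked iff all its maximal elements are); (ii) the classification of stack items from Remark~\ref{rmk:stackclassification}, so that the only non-pure $V$ is a $U$\nbd linked $\bord{k}{-}U$; and (iii) the observation that the $U$\nbd linked sets $U, \bord{\dmn U - 1}{-}U, \ldots, \bord{0}{-}U$ are all pushed during the initial descent, so after the first visit to $\bord{k}{-}U$ its input boundary $\bord{k-1}{-}U$ is fully marked by Lemma~\ref{lem:fullymarked}. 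Your identity $V = \clos(V_n) \cup \bord{}{-}V$ is the same set-theoretic decomposition the paper uses (implicitly, via ``maximal elements of dimension $< k$ belong to $\bord{k-1}{-}U$''). Your ``crux'' paragraph spells out in more detail why $U$\nbd linked sets arise only in the initial descent, but given Lemma~\ref{lem:norepetitions} (each subset is pushed at most once) this is immediate, so no new idea is needed there.
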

\begin{proof}
First, we prove a weaker result: either $V$ is fully marked, or there exists a \emph{maximal} element of $V$ which is unmarked. 

Let $x \in V$ be marked. At some prior iteration, $\clos\{x\}$ must have been the focus, and by Lemma \ref{lem:fullymarked}, in order for $V$ to become the focus, $\clos\{x\}$ must have been fully marked as well. Because
\begin{equation*}
	V = \bigcup_{k \leq n} \clos{\maxd{k}{V}} = \bigcup_{k \leq n} \bigcup_{x \in \maxd{k}{V}} \clos{\{x\}},
\end{equation*}
it follows that $V$ is fully marked if and only if its maximal elements are all marked.

Now, $V$ has one of the two forms in Remark \ref{rmk:stackclassification}. If $V$ is of the second form, its maximal elements all have the top dimension, so we only need to consider the case $V = \bord{k}{-}U$.

At the start of the algorithm, $U, \ldots, \bord{0}{-}U$ are all consecutively added to the stack.
So $\bord{k}{-}U$ becomes the focus either at this stage, in which case \emph{all} its elements are unmarked, or after $\bord{k-1}{-}U$ is fully marked. 
In the latter case, any maximal element of $\bord{k}{-}U$ of dimension strictly smaller than $k$ also belongs to $\bord{k-1}{-}U$.
\end{proof}

\begin{thm} \label{thm:traversalcorrectness}
The traversal algorithm is correct: given a regular molecule $U$, it terminates returning a unique linear ordering of the elements of $U$. 
\end{thm}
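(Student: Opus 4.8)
The plan is to prove three claims in turn: (i) whenever a step of the procedure in Figure~\ref{fig:traversal} is reached it is well-defined, so that the algorithm never gets stuck; (ii) the main loop terminates; and (iii) on termination $\data{marked}$ is a repetition-free enumeration of the elements of $U$, hence determines a total order. Since every branch of the loop is deterministic --- ``top of $\data{stack}$'', ``$\data{focus} = \clos\{x\}$ for some $x$'', ``first item of dimension $\data{dim}-1$ in $\data{marked}$ with an unmarked input coface'' are all uniquely determined once $U$ is fixed --- claim (iii) immediately gives that $\data{marked}$ is \emph{the} linear order produced by the run. Moreover, because $\bord{}{\alpha}(-)$, $\clos(-)$, maximality, cofaces, and the order of insertion into $\data{marked}$ are all preserved under maps of oriented graded posets, and isomorphisms between regular molecules are unique (see the Comment following Definition~\ref{dfn:molecules}), this order depends only on the isomorphism class of $U$; this is what makes the traversal-based isomorphism test of the following sections sound, and I would record it alongside the theorem.

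Termination is the easiest part. Inspecting the four branches, every iteration of the loop either pushes exactly one set onto $\data{stack}$, or pops $\data{focus}$ (in the $\clos\{x\}$ branch, possibly pushing $\bord{}{+}\data{focus}$ right afterwards); in particular every iteration performs at least one push or one pop. By Lemma~\ref{lem:norepetitions} each subset of $U$ is pushed at most once, and $U$ has finitely many subsets, so there are finitely many pushes, hence finitely many pops, hence finitely many iterations. (The sharp polynomial bound is postponed to Theorem~\ref{thm:complexity}.)

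For claim (i), the only steps that can fail are those in the final \textbf{else} branch, which requires a marked $(\data{dim}-1)$\nbd dimensional element $y$ of $\data{focus}$ with an unmarked input coface in $\data{focus}$, and requires that coface to be \emph{unique}. Uniqueness is a structural property of regular molecules: a codimension-one element is an input face of at most one element and an output face of at most one element, which I would cite from \cite[\S~1]{hadzihasanovic2020diagrammatic}. Existence I would extract from an invariant maintained along the run: whenever $\data{focus} = V$, $n \eqdef \dmn{V}$, and $\bord{}{-}V \subseteq \data{marked}$, the marked elements of $V$ form the closure $\clos{(\bord{}{-}V \cup S)}$ for a sequence $S$ of $n$\nbd dimensional elements of $V$ that is an order-compatible prefix of a layering of $V$. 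Here I use Remark~\ref{rmk:stackclassification} (so that $V$, if not of the form $\bord{k}{-}U$, is round, hence pure of dimension $n$) and Lemma~\ref{lem:topdimsuffices} (so that an unmarked element of $V$ may be taken $n$\nbd dimensional); the theory of layerings of regular molecules in \cite{hadzihasanovic2020diagrammatic} then supplies the next top cell $x$, whose input boundary lies in the currently marked region and at least one of whose input $(n-1)$\nbd faces is maximal in that region while still having $x$ as an input coface --- this is the $y$ the search returns, and $x$ is its unique input coface. Establishing and propagating this invariant --- that the marked part of the focus is always a ``layerable prefix'' of it --- is where I expect the real work to be, as it is the bridge between the syntactic stack bookkeeping of the preceding lemmas and the combinatorics of regular molecules.

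For claim (iii): an element $x$ is appended to $\data{marked}$ only at an iteration with $\data{focus} = \clos\{x\}$, and $\clos\{x\}$, which is popped at that iteration, is never re-pushed (Lemma~\ref{lem:norepetitions}); so each $x$ is appended at most once, and clearly $x \in U$. For the converse, $U$ is always the bottom of $\data{stack}$ (Lemma~\ref{lem:stackmolecule} forbids any set below it), and the loop halts exactly when $\data{stack}$ empties, that is, after the last pop of $U$ --- or, when $U$ is an atom, after the last pop of the nested chain $\bord{}{+}U \supseteq \bord{}{+}\bord{}{+}U \supseteq \ldots$ that $U$ spawns. The argument of Lemma~\ref{lem:fullymarked}, together with the globularity identities $\bord{}{\alpha}\bord{}{+}V = \bord{}{\alpha}\bord{}{-}V \subseteq \bord{}{-}V$ it uses, shows each of these sets is fully marked by the time it is popped for good; chasing the chain and using $U = (\text{greatest element, if any}) \cup \bord{}{-}U \cup \bord{}{+}U$ yields $U \subseteq \data{marked}$ on termination. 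Together with the no-repetition observation, $\data{marked}$ enumerates $U$ exactly once, so it is a linear ordering of the elements of $U$, and by the determinism noted above it is the unique such ordering the algorithm can return.
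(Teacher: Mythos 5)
Your decomposition is sound in outline, and in one respect cleaner than the paper's: you separate termination from full marking, getting termination by the purely combinatorial observation that every iteration performs a push or a pop, pushes are finite by Lemma~\ref{lem:norepetitions}, and pops are bounded by pushes. The paper instead proves termination and full marking in one stroke by showing that the algorithm always makes progress on marking unless everything is marked. Either route works for the bookkeeping half of the theorem, and your claim~(iii) is essentially the paper's opening remark that $U$ must be fully marked before the stack empties, which does follow from the argument of Lemma~\ref{lem:fullymarked}.

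The genuine gap is claim~(i), which you yourself flag as ``where I expect the real work to be'' but do not carry out. Your proposed invariant --- that the marked part of a proper focus $V$ is always of the form $\clos\big(\bord{}{-}V \cup S\big)$ for a prefix $S$ of a layering of $V$ --- is plausible, but you never establish it, and establishing it and extracting existence is exactly the content you would need to supply. In particular you would have to show the invariant survives the recursive excursions into $\clos\{x\}$ and back, which is not routine bookkeeping. The paper fills this gap differently and more directly: using \cite[Lemma~1.16]{hadzihasanovic2020diagrammatic} for uniqueness of the input coface, and the dual of \cite[Lemma~1.37]{hadzihasanovic2020diagrammatic} to produce, for any unmarked $x \in V_n$, an alternating path
\begin{equation*}
y_0 \to x_0 \to \ldots \to y_m \to x_m = x, \qquad y_0 \in \sbord{n-1}{-}V,\ x_i \in V_n,
\end{equation*}
with $y_i$ an input face of $x_i$ and $y_{i+1}$ an output face of $x_i$. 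Since $\bord{}{-}V$ is fully marked at a proper focus, $y_0$ is marked; taking the least $k$ with $x_k$ unmarked gives a marked input face $y_k$ of $x_k$, so the search in the final branch succeeds. This is the combinatorial heart of the theorem and cannot be deferred. Without it, your claim~(iii) also quietly assumes the algorithm never gets stuck in that branch, so the whole proof hinges on the missing step. If you want to keep your ``layering prefix'' framing, you would need to state it as a loop invariant, prove it is preserved by every branch, and then invoke the existence of layerings to extract the next top cell --- morally the same content as the paper's path argument, just packaged differently.
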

\begin{proof}
As a particular case of Lemma \ref{lem:fullymarked}, $U$ must be fully marked before the stack is emptied. Therefore, the algorithm either terminates after all elements have been traversed, or it does not terminate.

To prove that the algorithm does always terminate, it suffices to show that, unless all elements are already marked, it always finds an element to mark.
First of all, observe that, from any state, the algorithm first goes through the following sequence of steps:
\begin{enumerate}
	\item popping all fully marked subsets from the top of the stack;
	\item once it reaches a subset which is not fully marked, successively pushing its lower\nbd dimensional input boundaries that are not fully marked onto the stack.
\end{enumerate}
At the end of this sequence, we always reach a state in which the focus $V$ is not fully marked, but $\bord{}{-} V$ is fully marked. Let us call such a $V$ a \emph{proper} focus.

We proceed by induction on dimension and proper subsets of a proper focus. If $\dmn{V} = 0$, since a 0\nbd molecule always consists of a single element, $V = \{x\}$, and $x$ gets marked at the current iteration.

Let $n \eqdef \dmn{V}$. By Lemma \ref{lem:topdimsuffices}, there is an unmarked $x \in V_n$. If $V = \clos\{x\}$, then $x$ is marked at the current iteration, and we are done. Otherwise, we prove that there always exists a pair $(y, x)$ where $x \in V_n$ is unmarked, and $y$ is a marked input face of $x$. 
By \cite[Lemma 1.16]{hadzihasanovic2020diagrammatic} applied to $V$, the coface $x$ is unique given $y$, so among such pairs we can pick the one where $y$ comes \emph{earliest} in the list of marked elements, and this selects a unique $x$.

Let $x \in V_n$ be unmarked. By a dual version of [\textit{ibid.}, Lemma 1.37], there exists a sequence 
\begin{equation*}
	y_0 \to x_0 \to \ldots \to y_m \to x_m = x
\end{equation*}
where $y_0 \in \sbord{n-1}{-}V$, $x_i \in V_n$, $y_i$ is an input face of $x_i$, and $y_{i+1}$ is an output face of $x_i$. 
Since $V$ is a proper focus, $y_0$ is marked. Let $k$ be the smallest index such that $x_k$ is unmarked; because $x_m$ is unmarked, such a $k$ exists. 
Then $x_i$ is marked for all $i < k$, hence $\clos\{x_i\}$ is also marked.
It follows that $y_k \in \bord{}{+}x_{k-1}$ is marked, and the pair $(y_k, x_k)$ satisfies our requirement.

Thus, the algorithm will find a unique $x \in V_n$ and push $\clos\{x\}$ onto the stack.
The next proper focus will necessarily be a proper subset of $V$, and we conclude by the inductive hypothesis.
\end{proof}

\begin{dfn}
In what follows, for a fixed regular molecule $U$, we let $|E_n|$ be the number of edges between $n$ and $(n-1)$\nbd dimensional elements in the Hasse diagram of $U$, and we let
\begin{equation*}
	|U_\mathrm{max}| \eqdef \max_{n} |U_n|, \quad \quad |E_\mathrm{max}| \eqdef \max_{n} |E_n|.
\end{equation*}
\end{dfn}

\begin{thm} \label{thm:complexity}
The traversal algorithm admits an implementation running in time
\begin{equation*} 
O\big(|U|^2(|E_\mathrm{max}|\cdot \log |E_\mathrm{max}| + |U_\mathrm{max}|\cdot \log |U_\mathrm{max}|)\big).
\end{equation*}
\end{thm}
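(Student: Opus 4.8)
The plan is to account for the cost of the traversal algorithm iteration by iteration, bounding both the number of iterations of the main loop and the cost of each individual iteration, and then multiply. First I would establish that the number of iterations is $O(|U|^2)$: by Lemma~\ref{lem:norepetitions}, each subset $V \subseteq U$ is pushed at most once, and by Remark~\ref{rmk:stackclassification} the only subsets that ever appear are the $U$-linked set $\bord{k}{-}U$ (at most $|U|$ of these, one per dimension) and the $x$-linked sets $\clos\{x\}$, $\bord{k}{\alpha}x$ for $x \in U$ (at most $O(|U|)$ per element $x$, hence $O(|U|^2)$ in total). Since each push corresponds to at most a bounded number of loop iterations spent on that stack entry before either popping it or pushing a new smaller entry, the main loop runs $O(|U|^2)$ times.

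Next I would bound the per-iteration cost. The operations performed in a single iteration are: computing $\dmn{\data{focus}}$; testing containments like $\data{focus} \subseteq \data{marked}$ and $\bord{}{\alpha}\data{focus} \not\subseteq \data{marked}$; computing the boundaries $\bord{}{-}\data{focus}$ and $\bord{}{+}\data{focus}$; testing whether $\data{focus} = \clos\{x\}$ for some $x$; and, in the remaining branch, scanning $\data{marked}$ for the earliest dimension-$(\data{dim}-1)$ element $y$ with an unmarked input coface in $\data{focus}$, then finding the unique such coface $x$. The boundary computations and roundness-type checks are the straightforward low-degree-polynomial algorithms already alluded to in the Implementation block; I would argue each of these touches each element and each relevant Hasse edge a constant number of times, with a $\log$ factor for the binary-search lookups into the sorted-array representation of the face/coface sets and of $\data{marked}$. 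The dominant contributions are: sorting or merging position-sets of a fixed dimension, which costs $O(|U_\mathrm{max}| \log |U_\mathrm{max}|)$; and iterating over the edges incident to a fixed dimension while doing logarithmic lookups, which costs $O(|E_\mathrm{max}| \log |E_\mathrm{max}|)$. The coface-scanning step in the last branch is the one that needs care: naively it could scan all of $\data{marked}$, but since we only care about elements of dimension $\data{dim}-1$ and their input cofaces in $\data{focus}$, the work is bounded by the number of such edges, i.e.\ $O(|E_\mathrm{max}| \log |E_\mathrm{max}|)$, provided $\data{marked}$ is stored dimension-by-dimension so that the dimension-$(\data{dim}-1)$ slice can be iterated directly.

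Combining, each of the $O(|U|^2)$ iterations costs $O(|E_\mathrm{max}| \log |E_\mathrm{max}| + |U_\mathrm{max}| \log |U_\mathrm{max}|)$, giving the claimed bound
\begin{equation*}
O\big(|U|^2(|E_\mathrm{max}| \cdot \log |E_\mathrm{max}| + |U_\mathrm{max}| \cdot \log |U_\mathrm{max}|)\big).
\end{equation*}
I expect the main obstacle to be the bookkeeping in the coface-scanning branch: one must choose data structures so that "the first item of dimension $\data{dim}-1$ in $\data{marked}$ with an unmarked input coface in $\data{focus}$" can be found without rescanning already-processed elements across iterations, and so that the total work charged to this branch over the whole run stays within budget. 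The cleanest way is probably to maintain, for each stack entry, a pointer into the marked list recording how far the scan has progressed, so that the scanning work telescopes; I would also need to confirm that the boundary and containment subroutines can be implemented to reuse the dimension-indexed representation of $\data{GrSet}$ without incurring more than the stated $\log$ factors. The remaining steps — counting iterations via Lemmas~\ref{lem:norepetitions} and the classification in Remark~\ref{rmk:stackclassification}, and the elementary per-operation cost estimates — are routine.
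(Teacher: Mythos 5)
Your plan is essentially the same as the paper's proof: it bounds the main-loop iteration count by $O(|U|^2)$ via Lemma~\ref{lem:norepetitions} and the classification in Remark~\ref{rmk:stackclassification}, then bounds the per-iteration cost by $O(|E_\mathrm{max}|\log|E_\mathrm{max}| + |U_\mathrm{max}|\log|U_\mathrm{max}|)$ using dimension-indexed sorted-array representations, and multiplies. The paper additionally makes explicit that stack items are represented by their graded sets of \emph{maximal elements} and invokes Lemma~\ref{lem:topdimsuffices} so that the fully-marked check only touches top-dimensional elements, and it resolves your concern about the coface-scanning step with a $\data{candidates}$ associative array indexed by input faces rather than a telescoping pointer, but these are implementation choices within the same argument.
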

\begin{proof}
First of all, we represent any closed set on the stack with its graded set of maximal elements. 
To initialise the algorithm, we only need to compute the maximal elements of $U$. 
This can be done in time $O(|U|)$ by going through the elements of $U$ and checking if their set of cofaces is empty.

Next, let us find an upper bound for the number of iterations of the main loop (line 4).
Let $V$ be a set on the stack, $n \eqdef \dmn{V}$. Then $V$ can become the focus
\begin{itemize}
	\item at most once before pushing $\bord{}{-} V$ onto the stack (line 11),
	\item at most once before pushing $\clos\{y\}$ onto the stack for each $y \in V_n$ (line 22), and
	\item at most once to be popped from the stack (line 8),
\end{itemize}
after which, by Lemma \ref{lem:norepetitions}, it can never appear again. 
Thus, the number of loop iterations with $V$ as focus is bounded by $|V_n| + 2$. 

By Remark \ref{rmk:stackclassification}, every set $V$ on the stack is either ``$U$\nbd linked'' or ``$x$\nbd linked'' for some $x \in U$. 
There are $(\dmn{U} + 1)$ many $U$\nbd linked focusses and $(2\dmn{x} + 1)$ many $x$\nbd linked focusses.
Then
\begin{itemize}
	\item the number of loop iterations with $U$\nbd linked focusses is bounded by $|U| + 2\dmn{U} + 2$, and
	\item for each $x$, the number of iterations with $x$\nbd linked focusses is bounded by $|\clos\{x\}| + 4\dmn{x} + 2$.
\end{itemize}
Since there are $|U|$ elements, $|\clos\{x\}| \leq |U|$, and $\dmn{x} \leq \dmn{U}$, we have a coarse upper bound of $(|U| + 1)(|U| + 4\dmn{U} + 2)$ on the total number of iterations, which is $O(|U|^2)$.

Next, in our implementation, we split the list of marked elements into three objects: a list $\data{order}$ (for the total traversal order), an array of lists $\data{grorder}$ (for the traversal order split by dimension), and a graded set $\data{marked}$ (for the set of marked elements). 

Consider a single loop iteration with focus $V$, $n \eqdef \dmn{V}$.

\noindent \textbf{(Line 7).} 
By Lemma \ref{lem:topdimsuffices}, to check if $V$ is fully marked, it suffices to check whether $V_n \subseteq \data{marked}_n$.
Since both are sorted arrays of integers, they can be compared in time linear in $|V_n| + |\data{marked}_n|$, which is $O(|U_n|)$.
At this stage, we may also record the unmarked $n$\nbd dimensional elements of $V$ in a sorted array $\data{unmarked}$ without affecting the complexity.

\noindent \textbf{(Line 10).} To compute the maximal elements of $\bord{}{-}V$ and $\bord{}{+}V$, we may use different strategies depending on whether $V$ is ``$U$\nbd linked'' or not.

If $V = \bord{n}{-}U$, we compute the $(n-1)$\nbd dimensional elements of $\bord{}{-}V = \bord{n-1}{-}U$ simply by going through the elements of $U_{n-1}$ and checking which ones have empty sets of output cofaces, in time $O(|U_{n-1}|)$.
Lower\nbd dimensional maximal elements are shared between $V$ and $\bord{}{-}V$, so we may then point from the latter to the former, at no extra cost.

If $V$ is not $U$\nbd linked, $V$ and its boundaries are pure, so the set of maximal elements of $\bord{}{\alpha}V$ is equal to $\sbord{}{\alpha} V$, and each of its elements is covered by an element of $V_n$.
To compute it, we add all the input and output faces of all $x \in V_n$ to sets $\data{in\_faces}$ and $\data{out\_faces}$, respectively, then use the relations $\sbord{}{-}V = \data{in\_faces} \setminus \data{out\_faces}$ and $\sbord{}{+}V = \data{out\_faces} \setminus \data{in\_faces}$.

There are $O(|E_n|)$ faces of elements of $V_n$, and we can sort $\data{in\_faces}$ and $\data{out\_faces}$, remove duplicates, and compute their difference in time $O(|E_n|\cdot \log |E_n|)$.

At this stage, we also create an associative array $\data{candidates}$ as follows: whenever $x \in V_n$ is in $\data{unmarked}$, and $y$ is an input face of $x$, we add the position of $x$ as a value to $\data{candidates}$, indexed by the position of $y$.
We then sort the indices of $\data{candidates}$. 
This also takes time $O(|E_n| \cdot \log |E_n|)$ so it does not affect the overall complexity.

\noindent\textbf{(Lines 10, 16).} By the same reasoning applied to line 7, checking if $\bord{}{-}V$ and $\bord{}{+}V$ are fully marked takes time $O(|U_{n-1}|)$.

\noindent\textbf{(Line 14).} If $V_n$ has a single element that we mark, adding it to $\data{order}$ and $\data{grorder}$ takes constant time with an appropriate implementation of lists.
Adding it to $\data{marked}$ takes $O(|U_n|)$.

\noindent\textbf{(Lines 19---21).} To select the next focus we traverse $\data{grorder}_{n-1}$ starting from the first item and search for each item in the indices of $\data{candidates}$ until we find a hit $y$.
This takes time $O(|U_{n-1}|\cdot \log |U_{n-1}|)$ in the worst case.
The next focus will be $\clos\{x\}$, where $x$ is the value corresponding to index $y$.

Overall, the worst-case complexity is $O(|U_n| + |E_n|\cdot \log |E_n| + |U_{n-1}|\cdot \log |U_{n-1}|)$. Using the bounds $|U_n|, |U_{n-1}| \leq |U_\mathrm{max}|$ and $|E_n| \leq |E_\mathrm{max}|$, and multiplying by our bound on the number of iterations, we conclude. 
\end{proof}

\section{A type theory for higher-dimensional rewriting} \label{sec:diagrams}

\begin{dfn}
We rapidly go through the definitions of diagrammatic sets and some related notions.
For a thorough treatment, we refer to \cite[Section 4 and onwards]{hadzihasanovic2020diagrammatic}, and to \cite[Section V]{hadzihasanovic2021smash} for diagrammatic complexes as presentations of higher\nbd dimensional theories.
\end{dfn}

\begin{dfn}[Diagrammatic set]
Let $\atom$ (to be read \emph{atom}) be a skeleton of the full subcategory of $\ogpos$ on the atoms of every dimension. 
A \emph{diagrammatic set} is a presheaf on $\atom$. Diagrammatic sets and their morphisms of presheaves form a category $\dgmset$.
\end{dfn}

\begin{dfn} 
We identify $\atom$ with a full subcategory $\atom \incl \dgmset$ via the Yoneda embedding. With this identification, we use morphisms in $\dgmset$ as our notation for both elements and structural operations of a diagrammatic set $X$:
\begin{itemize}
	\item $x \in X(U)$ becomes $x\colon U \to X$, and
	\item for each map $f\colon V \to U$ in $\atom$, $X(f)(x) \in X(V)$ becomes $f;x\colon V \to X$.
\end{itemize}
The embedding $\atom \incl \dgmset$ extends along pushouts of inclusions to the full subcategory of $\ogpos$ on the regular molecules.
\end{dfn}

\begin{dfn}[Diagrams and cells] 
Let $X$ be a diagrammatic set and $U$ a regular molecule. A \emph{diagram of shape $U$ in $X$} is a morphism $x\colon U \to X$. A diagram is a \emph{cell} if $U$ is an atom. For all $n \in \mathbb{N}$, we say that $x$ is an \emph{$n$\nbd diagram} or an \emph{$n$\nbd cell} when $\dmn{U} = n$.

If $U$ decomposes as $U_1 \cp{k} U_2$, we write $x = x_1 \cp{k} x_2$ for $x_i \eqdef \imath_i;x$, where $\imath_i$ is the inclusion $U_i \incl U$ for $i \in \{1,2\}$. 
Let $\imath_k^\alpha\colon \bord{k}{\alpha}U \incl U$ be the inclusions of the $k$\nbd boundaries of $U$. 
The \emph{input $k$\nbd boundary} of $x$ is the diagram $\bord{k}{-}x \eqdef \imath_k^-;x$ and the \emph{output $k$\nbd boundary} of $x$ is the diagram $\bord{k}{+}x \eqdef \imath_k^+;x$. 
We write $x\colon y^- \celto y^+$ to express that $\bord{k}{\alpha}x = y^\alpha$ for each $\alpha \in \{+,-\}$.
\end{dfn}

\begin{dfn}[Diagrammatic complex]
For each $n \in \mathbb{N}$, let $\atom_n$ be the full subcategory of $\atom$ on the atoms of dimension $\leq n$, and let $\atom_{-1}$ be the empty subcategory. 
The restriction functor $\dgmset \to \psh{}{\atom_n}$ has a left adjoint; let $\skel{n}{}$ be the comonad induced by this adjunction. 
The \emph{$n$\nbd skeleton} of a diagrammatic set $X$ is the counit $\skel{n}{X} \to X$. 
For all $k \leq n$, the $k$\nbd skeleton factors uniquely through the $n$\nbd skeleton of $X$.

A \emph{diagrammatic complex} is a diagrammatic set $X$ together with a set $\gen{X} = \sum_{n\in \mathbb{N}} \gen{X}_n$ of \emph{generating} cells such that, for all $n \in \mathbb{N}$,
\begin{equation*}
\begin{tikzpicture}[baseline={([yshift=-.5ex]current bounding box.center)}]
	\node (0) at (-1.5,1.5) {$\bigsqcup_{x \in \gen{X}_n} \bord U(x)$};
	\node (1) at (2.5,0) {$\skel{n}{X}$};
	\node (2) at (-1.5,0) {$\skel{n-1}{X}$};
	\node (3) at (2.5,1.5) {$\bigsqcup_{x \in \gen{X}_n} U(x)$};
	\draw[1cinc] (0) to (3);
	\draw[1c] (0) to node[auto,arlabel] {$(\bord x)_{x \in \gen{X}_n}$} (2);
	\draw[1cinc] (2) to (1);
	\draw[1c] (3) to node[auto,arlabel] {$(x)_{x \in \gen{X}_n}$} (1);
	\draw[edge] (1.6,0.2) to (1.6,0.7) to (2.3,0.7);
\end{tikzpicture}
\end{equation*}
is a pushout in $\dgmset$, where $U(x)$ denotes the shape of $x$. A diagrammatic complex is \emph{finite} if $\gen{X}$ is finite.
\end{dfn}

\begin{dfn}[Support-based diagrammatic complex]
Each cell in a diagrammatic complex $(X, \gen{X})$ is uniquely of the form $(p\colon U \surj V,\, x\colon V \to X)$, where $p$ is a surjective map of atoms and $x \in \gen{X}$. 
We let $\data{supp}(p, x) \eqdef x$, the \emph{support} of $(p, x)$.

A \emph{support\nbd based diagrammatic complex} is the quotient of a diagrammatic complex by the relations
\begin{equation} \label{eq:supportbased}
	x \sim y \text{ if and only if } \data{supp}(\imath; x) = \data{supp}(\imath; y) \text{ for all inclusions of atoms } \imath\colon V \incl U,
\end{equation}
for all atoms $U$ and cells $x, y\colon U \to X$.
We let $\dgmcpx$ denote the category of finite, support\nbd based diagrammatic complexes with morphisms of their underlying diagrammatic sets.
\end{dfn}

\begin{dfn}
We define a dependent type theory for diagrammatic sets -- more precisely, for finite, support\nbd based diagrammatic complexes -- that relies on an underlying unique representation of regular molecules and their maps, treated as a ``black box''.
Of course, in the previous section we have provided such an implementation and proved that it is computationally feasible.
Nevertheless, it is useful to separate its abstract properties from the implementation details.
\end{dfn}

\begin{dfn}[$\diagset$]
Let $\mathbb{V}$ be an infinite set of variables. 
We define a type theory $\diagset$ as follows. 

\noindent \textbf{Terms.} A term $t$ is a pair of a regular molecule $U$, the \emph{shape} of $t$, and a function $t\colon U \to \mathbb{V}$.
We write $\slice{t}{U}$ to express that $t$ is a term of shape $U$.
Maps $p\colon U \to V$ act on terms by precomposition: if $\slice{t}{V}$ is a term, then $p^*t \eqdef \slice{(p;t)}{U}$.
In particular, we let $\bord{k}{\alpha}t \eqdef \slice{(\imath_k^\alpha;t)}{\bord{k}{\alpha}V}$ for all $k \in \mathbb{N}$ and $\alpha \in \{+,-\}$.

\noindent \textbf{Types.} A type $A$ is either $\varnothing$ or an expression $t \celto s$ where $t, s$ are terms. We may annotate a term $t$ of shape $U$ with the type $A \eqdef \varnothing$ if $U \equiv \bullet$, and $A \eqdef \bord{}{-}t \celto \bord{}{+}t$ otherwise.

\noindent \textbf{Contexts.} A context $\Gamma$ is a list $x_1: A_1, \ldots, x_n: A_n$ of typed variables. We consider two contexts to be equal if they are equal up to a permutation. If $x: A$ is a typed variable, we say that $x$ has \emph{shape} $\bullet$ if $A \equiv \varnothing$, and $U \celto V$ if $A \equiv \slice{t}{U} \celto \slice{s}{V}$.
We write $\slice{x}{U}: A$ to express that $x: A$ has shape $U$.

\noindent \textbf{Substitutions.} A substitution $\sigma$ is a list $x_1 \mapsto t_1, \ldots, x_n \mapsto t_n$ of assignments of terms to variables. 
We consider two substitutions to be equal if they are equal up to a permutation.

\noindent \textbf{Judgments.} We consider three kinds of judgments:
\begin{itemize}
	\item $\Gamma \vdash \quad$ meaning that $\Gamma$ is a well-formed context,
	\item $\Gamma \vdash t \quad$ meaning that $t$ is a well-formed term in context $\Gamma$, and
	\item $\Delta \vdash \sigma: \Gamma \quad$ meaning that $\sigma$ is a well-formed substitution from context $\Delta$ to context $\Gamma$.
\end{itemize}
The inference rules of $\diagset$ are the following. We use $\langle \rangle$ to indicate the empty list.

\begin{algor}\small
\textbf{Rules for contexts.}
\begin{equation*}
\begin{prooftree}
	{ \quad }
	\justifies
	{ \langle \rangle \vdash }
	\using
	{\scriptstyle \data{init}}
\end{prooftree}
\quad \quad
\begin{prooftree}
	{ \Gamma \vdash}
	\justifies
	{ \Gamma,\, x:\varnothing \vdash }
	\using
	{\scriptstyle \data{pt}}
\quad \quad
\end{prooftree}
\begin{prooftree}
	{ \Gamma \vdash \slice{t}{U}: r^- \celto r^+ \quad \Gamma \vdash \slice{s}{V}: r^- \celto r^+ \quad \text{$U, V$ $\data{round}$} }
	\justifies
	{ \Gamma, \,x: t \celto s \vdash}
	\using
	{\scriptstyle \data{gen}}
\end{prooftree}
\end{equation*}
\hfill (where $x \in \mathbb{V}$ is fresh)

\textbf{Rules for terms.}
\begin{equation*}
\begin{prooftree}
	{ \Gamma \vdash \qquad (\slice{x}{V} : A) \in \Gamma \qquad \text{$U$ $\data{atom}$} \qquad \text{$p\colon U \surj V$ $\data{surjective}$} }
	\justifies
	{ \Gamma \vdash \slice{p^* \widehat{x}}{U} }
	\using
	{\scriptstyle\data{cell}}
\end{prooftree}
\end{equation*}

\begin{equation*}
\begin{prooftree}
	{ \Gamma \vdash \slice{t}{U} \qquad \Gamma \vdash \slice{s}{V} \qquad \bord{k}{+}t \equiv \bord{k}{-}s }
	\justifies
	{ \Gamma \vdash \slice{(t \cp{k} s)}{(U \cp{k} V)} }
	\using
	{\scriptstyle\data{paste}_k}, \quad {k < \min (\dmn{U}, \dmn{V})}
\end{prooftree}
\end{equation*}

\textbf{Rules for substitutions.}
\begin{equation*}
\begin{prooftree}
	{ \Gamma \vdash }
	\justifies
	{ \Gamma \vdash \langle \rangle: \Gamma}
	\using
	{\scriptstyle\data{id}}	
\end{prooftree}
\quad \quad
\begin{prooftree}
	{ \Delta \vdash \sigma: \Gamma \qquad \Gamma,\, x: \slice{s}{U} \celto \slice{r}{V} \vdash \qquad \Delta \vdash \slice{t}{U \celto V}: s[\sigma] \celto r[\sigma] }
	\justifies
	{ \Delta \vdash \langle\sigma,\, x \mapsto t\rangle : (\Gamma,\, x: s \celto r) }
	\using
	{\scriptstyle\data{ext}}
\end{prooftree}
\end{equation*}
\end{algor}
In the rules $\data{cell}$ and $\data{paste}$, the terms $\widehat{x}$ and $t \cp{k} s$ are defined as follows:
\begin{itemize}
	\item $\widehat{x}$ is the unique term of shape $V$ which sends the greatest element of $V$ to $x$, and, if $A \equiv t \celto s$, is equal to $t$ on $\bord{}{-}V$ and to $s$ on $\bord{}{+}V$;
	\item $t \cp{k} s$ is the unique term of shape $U \cp{k} V$ that is equal to $t$ on $U \incl (U \cp{k} V)$ and to $s$ on $V \incl (U \cp{k} V)$. 
\end{itemize}
The side conditions for $\data{gen}$ and $\data{paste}$ ensure that this is well-defined. 

To define the action $t[\sigma]$ of a well-formed substitution $\sigma$ on a term $t$, we extend $\sigma$ to a function $\mathbb{V} \to \mathbb{V}$ as follows: for all $x \in \mathbb{V}$, if $(x \mapsto \slice{t}{U}) \in \sigma$, we let $\sigma(x) \eqdef t(\top)$, where $\top$ is the greatest element of $U$; otherwise, $\sigma(x) \eqdef x$.
Then $t[\sigma]$ is the composite of $t\colon U \to \mathbb{V}$ and $\sigma\colon \mathbb{V} \to \mathbb{V}$.
Note that this is well-defined because a well-formed substitution assigns to each variable a term whose shape is an atom.
\end{dfn}

\begin{dfn}[Syntactic category]
The syntactic category $\syncat$ has
\begin{itemize}
	\item well\nbd formed contexts $\Gamma$ as objects, and
	\item well\nbd formed substitutions as morphisms from $\Delta$ to $\Gamma$,
\end{itemize}
with the obvious composition of substitutions, and empty substitutions as identities.
\end{dfn}

\begin{thm} \label{thm:synchar}
The category $\opp{\syncat}$ is equivalent to $\dgmcpx$.
\end{thm}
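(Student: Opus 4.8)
The plan is to build an interpretation functor $\intp{-}\colon \opp{\syncat} \to \dgmcpx$ directly from the inference rules and then prove it is full, faithful, and essentially surjective. On objects I define $\intp{\Gamma}$ by recursion on the derivation of $\Gamma \vdash$: the rule $\data{init}$ gives the empty support\nbd based diagrammatic complex; $\data{pt}$ attaches a fresh $0$\nbd dimensional generating cell; and $\data{gen}$, applied to $x\colon \slice{t}{U} \celto \slice{s}{V}$, attaches a fresh generating cell of shape $U \celto V$ whose input and output boundaries are glued along the diagrams interpreting $t$ and $s$ — well\nbd defined because the side conditions of $\data{gen}$ make $U \celto V$ an atom with matching round boundaries. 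On morphisms, a substitution $\Delta \vdash \sigma\colon \Gamma$ with $\sigma = (x_1 \mapsto t_1, \ldots, x_n \mapsto t_n)$ induces a morphism $\intp{\Gamma} \to \intp{\Delta}$ of the underlying diagrammatic sets by sending each generating cell $x_i$ to the cell interpreting $t_i$; this is legitimate because $\intp{\Gamma}$ is an iterated pushout of its generators. Functoriality (with the contravariance accounted for) is then checked against $\data{id}$ and $\data{ext}$, using the naturality clause of the lemma below.

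The technical core is a \emph{dictionary lemma}: for every well\nbd formed context $\Gamma$ and every regular molecule $U$, sending a well\nbd formed term $\Gamma \vdash \slice{t}{U}$ to the diagram $U \to \intp{\Gamma}$ it names is a bijection onto the set of diagrams of shape $U$ in $\intp{\Gamma}$, and it intertwines $t \mapsto t[\sigma]$ with precomposition by $\intp{\sigma}$. I would prove this by induction on the generation of regular molecules (Definition \ref{dfn:molecules}). For injectivity: a regular molecule is a colimit of the atoms $\clos\{e\}$ over its elements $e$ (these are atoms by Lemma \ref{lem:stackmolecule} and Remark \ref{rmk:stackclassification}), so a diagram is determined by its restrictions to them, and in a support\nbd based diagrammatic complex each such restriction is pinned down by its support $\data{supp}(\clos\{e\} \incl U \to \intp{\Gamma}) \in \mathbb{V}$, which is exactly the value $t(e)$. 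For surjectivity: if $U$ is an atom, a cell $U \to \intp{\Gamma}$ is of the form $(p\colon U \surj W, x)$ with $x$ generating, which is precisely the output of the $\data{cell}$ rule; if $U \equiv U_1 \cp{k} U_2$, restricting to the two factors and invoking the inductive hypothesis reproduces the $\data{paste}_k$ rule; the point case is trivial. That the variable\nbd labelling of the diagram produced from a $\data{cell}$\nbd or $\data{paste}$\nbd derivation equals $t$ is a direct induction on the derivation, and independence of the chosen pasting decomposition follows post hoc from injectivity.

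Granting the lemma, the three properties fall out. Faithfulness: $\intp{\sigma}$ is determined by the cells interpreting the $t_i$, which by injectivity recover the $t_i$, hence $\sigma$ up to reordering. Fullness: given $F\colon \intp{\Gamma} \to \intp{\Delta}$, each value $F(x_i)$ is a diagram of shape $U(x_i)$ in $\intp{\Delta}$, hence by surjectivity names a well\nbd formed term $t_i$; since $F$ commutes with restriction to input and output boundaries, the assignment $x_i \mapsto t_i$ meets the hypotheses of $\data{ext}$, so by induction on the length of $\Gamma$ it is a well\nbd formed substitution $\sigma$ with $\intp{\sigma} = F$. Essential surjectivity: given a finite support\nbd based diagrammatic complex $(X, \gen{X})$, enumerate $\gen{X}$ in nondecreasing dimension so that each generator's boundary already lies in the subcomplex spanned by its predecessors; reading off the types of those boundaries via the dictionary lemma yields a well\nbd formed context $\Gamma$ — the roundness side conditions of $\data{gen}$ hold because shapes of generators are atoms — with $\intp{\Gamma} \cong (X, \gen{X})$.

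I expect the main obstacle to be the dictionary lemma, specifically its surjectivity half: matching the inductive clauses generating well\nbd formed terms with the inductive structure of regular molecules, and checking that the variable\nbd labelling of the diagram obtained from a $\data{cell}$\nbd or $\data{paste}$\nbd derivation agrees on the nose with the term. This ultimately rests on the structural facts about regular molecules imported from \cite{hadzihasanovic2020diagrammatic} — existence of pasting decompositions into atoms, uniqueness of isomorphisms between isomorphic molecules, and the globular behaviour of boundaries — together with the bookkeeping that identifies sub\nbd atoms of a molecule with element\nbd closures. The remaining work (functoriality, the $\data{ext}$ boundary\nbd compatibility checks, naturality) is routine but somewhat lengthy.
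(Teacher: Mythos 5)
Your proposal is correct and, at bottom, is the same as the paper's: the interpretation functor $\intp{-}$ you define clause-by-clause from the inference rules is exactly the paper's, and proving it is an equivalence amounts to the same content either way. The paper goes the other route — it explicitly constructs $\data{enc}$ (a context from a complex, a term from a diagram by $\data{enc}(d)(e) = \data{name}(\data{supp}(\restr{d}{\clos\{e\}}))$, a substitution from a morphism) and declares the two functors mutually inverse up to natural isomorphism; your ``dictionary lemma'' is precisely the bijection between well-formed terms $\Gamma \vdash \slice{t}{U}$ and diagrams $U \to \intp{\Gamma}$ that makes that ``routine'' inverse check go through, and essential surjectivity is the object part of $\data{enc}$ rewritten. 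So there is no genuinely different decomposition or key lemma; you have unpacked the bijection the paper leaves implicit and repackaged ``two-sided inverse'' as ``fully faithful and essentially surjective.''

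Two small things worth tightening. In the $\data{gen}$ clause you describe $\intp{\Gamma,\, x: t \celto s}$ as a pushout gluing the new atom along its boundary; the paper is careful to quotient that pushout (taken in $\dgmset$) by the relations (\ref{eq:supportbased}), since the pushout of support-based complexes need not be support-based when $\bord\intp{\widehat{x}}$ identifies cells — you should state that explicitly, otherwise $\intp{\Gamma}$ may leave $\dgmcpx$. And in the injectivity half of the dictionary lemma, the key ingredients are (i) a diagram $U \to X$ is determined by its restrictions to the atoms $\clos\{e\}$ (which needs the extension of the Yoneda embedding along pushouts of inclusions), and (ii) in a support-based complex a cell $\clos\{e\} \to X$ is determined by the family of supports of its restrictions to $\clos\{e'\}$, $e' \leq e$, not just by $\data{supp}$ of the top cell alone; your sketch only names the top support, so spell out that the whole $\leq$-filtered family is what recovers the $\mathbb{V}$-labelling $t$.
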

\begin{proof}[Sketch of proof]
We define an encoding $\data{enc}$ of finite support\nbd based diagrammatic complexes, diagrams, and morphisms as contexts, terms, and substitutions. Given $(X, \gen{X})$, we pick an injective function $\data{name}\colon \gen{X} \to \mathbb{V}$, assigning unique variable names to the generating cells of $X$.

For all diagrams $d\colon U \to X$, we define a term $\data{enc}(d)$ as follows: for all $x \in U$, we let $\data{enc}(d)(x)$ be equal to $\data{name}(\data{supp}(\restr{d}{\clos\{x\}}))$.
Since $(X, \gen{X})$ is support-based, $\data{enc}(d) \equiv \data{enc}(d')$ implies $d = d'$.

Let $n$ be the greatest dimension in which $\gen{X}_n$ is non-empty, and pick a linear ordering $x_1, \ldots, x_{m_k}$ of $\gen{X}_k$ for all $k \leq n$. 
We let $\data{enc}(X, \gen{X}) \eqdef \Gamma_0, \ldots, \Gamma_n$, where
\begin{equation*}
	\Gamma_k \eqdef \data{name}(x_1): \data{enc}(\bord{}{-}x_1) \celto \data{enc}(\bord{}{+}x_1), \,\ldots, \,\data{name}(x_{m_k}): \data{enc}(\bord{}{-}x_{m_k}) \celto \data{enc}(\bord{}{+}x_{m_k}).
\end{equation*}
By the construction of $X$ as a colimit of its generating cells, any map $X \to Y$ is uniquely determined by what it does on $\gen{X}$.
Given a map $f\colon (X, \gen{X}) \to (Y, \gen{Y})$ in $\dgmcpx$, we let $\data{enc}(f)$ be the substitution
\begin{equation*}
	\langle\data{name}_X(x) \mapsto \data{enc}_Y(f(x))\rangle_{x \in \gen{X}}.
\end{equation*}
Conversely, we define an interpretation $\intp{-}$ of well\nbd formed contexts, terms, and substitutions by induction on inference rules of $\diagset$. 
At each step the interpretation $\intp{\Gamma}$ of a well\nbd formed context is a support-based diagrammatic complex with one generator $\intp{\widehat{x}}$ of shape $U$ for each variable $\slice{x}{U}$ in $\Gamma$.
\begin{itemize}
\item $(\data{init})$ The interpretation of the empty context is the initial diagrammatic set.
\item $(\data{pt})$ Suppose $\intp{\Gamma}$ is defined. The interpretation of $\Gamma,\, x:\varnothing$ is the coproduct $\intp{\Gamma} + \bullet$.
The interpretation of $\widehat{x}$ is the inclusion $\bullet \incl \intp{\Gamma} + \bullet$.
\item $(\data{gen})$ Suppose $\intp{\Gamma}$ and $\intp{\slice{t}{U}}, \intp{\slice{s}{V}}$ are defined.
	The interpretation of $\Gamma, \,x: t \celto s$ is the pushout of $\bord \intp{\widehat{x}}\colon \bord (U \celto V) \to \intp{\Gamma}$ and $\bord (U \celto V) \incl (U \celto V)$, quotiented by the equations (\ref{eq:supportbased}), where $\bord \intp{\widehat{x}}$ is equal to $\intp{t}$ on $\bord{}{-}(U \celto V)$ and to $\intp{s}$ on $\bord{}{+}(U \celto V)$. 
\item $(\data{cell})$ Suppose $\intp{\Gamma}$ is defined and has a generating cell $\intp{\widehat{x}}$.
The interpretation of $p^*\widehat{x}$ is $p;\intp{\widehat{x}}$.
\item $(\data{paste}_k)$ Suppose $\intp{\Gamma}$ and $\intp{t}, \intp{s}$ are defined with $\bord{k}{+}\intp{t} = \intp{\bord{k}{+}t} = \intp{\bord{k}{-}s} = \bord{k}{-}\intp{s}$.
The interpretation of $t \cp{k} s$ is the diagram $\intp{t} \cp{k} \intp{s}$.
\item $(\data{id})$ The interpretation of the empty substitution in context $\Gamma$ is the identity of $\intp{\Gamma}$.
\item $(\data{ext})$ Suppose $\intp{\sigma}$ and $\intp{t}$ are defined, where $\intp{\widehat{x}}$ and $\intp{t}$ both have the same shape $U$.
By the construction of $\intp{\Gamma, x}$ as a colimit of $\intp{\Gamma}$ and $U$, the pair of $\intp{\sigma}\colon \intp{\Gamma} \to \intp{\Delta}$ and $\intp{t}\colon U \to \intp{\Gamma}$ induces a unique morphism $\intp{\sigma, x \mapsto t}\colon \intp{\Gamma, x} \to \intp{\Delta}$.
\end{itemize}
It is routine to check that $\data{enc}$ and $\intp{-}$ define contravariant functors between $\dgmcpx$ and $\syncat$, and that they are each other's inverse up to natural isomorphism.
\end{proof}

\begin{rmk}
The proof of Theorem \ref{thm:synchar} gives a semantic characterisation of well\nbd formed terms as diagrams in a diagrammatic set.
An immediate consequence is that the following rule is admissible:
{\small \begin{equation*}
\begin{prooftree}
	{ \Gamma \vdash \slice{t}{V} \quad \quad \text{$p\colon U \to V$ $\data{map}$} }
	\justifies
	{ \Gamma \vdash \slice{p^* t}{U} }
	\using
	{\scriptstyle \data{pb}}
\end{prooftree}
\end{equation*} }
where $p$ is an arbitrary map of regular molecules.
\end{rmk}

\begin{comm}
A sticking point in our type theory is the fact that $\data{cell}$ is parametrised by an arbitrary surjective map of atoms $p$.
This is necessary to access the ``weak units'' and degenerate cells which in our framework are needed, among other things, to model nullary operations in an algebraic theory. 

In practice, however, this is the one point in which the underlying implementation of regular molecules and their maps has to be explicitly accessed in order to define $p$ and its domain.
To avoid this, in a practical implementation, we want to include explicitly some extra admissible rules, corresponding to the application of useful maps that are parametric in their codomain.

In particular, we want to explicitly include
\begin{itemize}
\item the trivial case $p \equiv \idd{U}$:
\begin{center}
{ \small
\begin{prooftree}
	{ \Gamma \vdash \qquad (\slice{x}{U} : A) \in \Gamma }
	\justifies
	{ \Gamma \vdash \slice{\widehat{x}}{U} }
	\using
	{\scriptstyle\data{cell'}}
\end{prooftree}
},
\end{center}

\item \emph{unit} rules, modelling \cite[\S 4.16]{hadzihasanovic2020diagrammatic}:
\begin{center}
{ \small
\begin{prooftree}
	{ \Gamma \vdash \slice{t}{U} }
	\justifies
	{ \Gamma \vdash \data{unit}(t) \eqdef \tau^*(t) : t \celto t }
	\using
	{\scriptstyle\data{unit}}
\end{prooftree}
},
\end{center}

\item \emph{left and right unitor rules}, modelling [\textit{ibid.}, \S 4.17]:
\begin{center}
{ \small
\begin{prooftree}
	{ \Gamma \vdash \slice{t}{U} \qquad \text{$V \submol \bord{}{-}U$ $\data{round}$} }
	\justifies
	{ \Gamma \vdash \data{lunitor}_V(t) \eqdef (\lmap{V}{U}{-})^* t }
	\using
	{\scriptstyle\data{lunitor}}
\end{prooftree}
	\quad \quad \quad 
\begin{prooftree}
	{ \Gamma \vdash \slice{t}{U} \qquad \text{$V \submol \bord{}{+}U$ $\data{round}$} }
	\justifies
	{ \Gamma \vdash \data{runitor}_V(t) \eqdef (\rmap{V}{U}{-})^* t }
	\using
	{\scriptstyle\data{runitor}}
\end{prooftree}
}
\end{center}
where $V$ can be specified, for example, by the set of positions of its maximal elements. 
\end{itemize}
We may also have extra rules for \emph{simplex and cube degeneracy} maps and for \emph{cube connection} maps, in the case where $U$ is an oriented simplex or cube as in [\textit{ibid.}, \S 3.33]. All of these are implemented as diagram methods in $\texttt{rewalt}$.
\end{comm}

\begin{exm} \label{exm:lunital}
As an example, we give a presentation in $\diagset$ of the theory of a left\nbd unital binary operation, together with its implementation in $\texttt{rewalt}$. 
In the framework of diagrammatic sets, a many-sorted ``monoidal theory'' is presented by a diagrammatic complex with a single 0\nbd cell; this is analogous to the way a monoidal category is a bicategory with a single 0\nbd cell.
The sorts are generating 1\nbd cells, the basic operations are generating 2\nbd cells, and ``oriented equations'' are generating 3\nbd cells.

First, we add a single 0\nbd cell $x$ and a single sort $a$.

\begin{center}
\noindent \begin{minipage}[c]{0.5\textwidth}
\small
\begin{prooftree}
\[ 
	\[
		{ \quad }
		\justifies
		{ \langle \rangle \vdash }
		\using
	{\scriptstyle\data{init}} 
	\]
	\justifies
	{ x: \varnothing }
	\using
	{\scriptstyle\data{pt}} 
\]
\justifies
{ x: \varnothing \vdash \widehat{x} }
\using
{\scriptstyle\data{cell'}}
\end{prooftree}
$\quad \quad$
\begin{prooftree}
\[
	{ x: \varnothing \vdash \widehat{x} \quad \quad x: \varnothing \vdash \widehat{x}}
	\justifies
	{ x: \varnothing, \, a: \widehat{x} \celto \widehat{x} \vdash }
	\using
	{\scriptstyle\data{gen}}
\]
\justifies
{ x: \varnothing,\, a: \widehat{x} \celto \widehat{x} \vdash \widehat{a} }
\using
{\scriptstyle\data{cell'}}
\end{prooftree}
\end{minipage}%
\begin{minipage}{0.5\textwidth}
\begin{lstlisting}[language=Python]
import rewalt
Lun = rewalt.DiagSet()
x = Lun.add('x')
a = Lun.add('a', x, x)
\end{lstlisting}
\end{minipage}
\end{center}

\noindent Let $\Gamma \eqdef x: \varnothing,\, a: \widehat{x} \celto \widehat{x}$. We add a binary operation $m$.

\begin{center}
\noindent
\begin{minipage}[c]{0.5\textwidth}
\small
\begin{prooftree}
\[
	{ \[
		{ \Gamma \vdash \widehat{a} \quad \quad \Gamma \vdash \widehat{a} }
		\justifies
		{ \Gamma \vdash \widehat{a} \cp{0} \widehat{a} }
		\using
		{\scriptstyle	\data{paste}_0}
	\] \quad \quad \Gamma \vdash \widehat{a} }
	\justifies
	{ \Gamma, \, m: \widehat{a} \cp{0} \widehat{a} \celto \widehat{a} \vdash }
	\using
	{\scriptstyle\data{gen}} 
\]
\justifies 
{\Gamma, \, m: \widehat{a} \cp{0} \widehat{a} \celto \widehat{a} \vdash \widehat{m} }
\using
{\scriptstyle\data{cell'}}
\end{prooftree}
\end{minipage}%
\begin{minipage}{0.5\textwidth}
\begin{lstlisting}[language=Python, firstnumber=5]
m = Lun.add('m', a.paste(a), a)
\end{lstlisting}
\end{minipage}
\end{center}

\noindent Let $\Gamma' \eqdef \Gamma, \, m: \widehat{a} \cp{0} \widehat{a} \celto \widehat{a}$. We produce a weak unit on $x$ and add a nullary operation $u$.

\begin{center}
\noindent \begin{minipage}[c]{0.5\textwidth}
\small
\begin{prooftree}
\[
	{ \[
		{ \Gamma' \vdash \widehat{x} }
		\justifies
		{ \Gamma' \vdash \data{unit}(\widehat{x}) }
		\using
		{\scriptstyle\data{unit}}
	\] \quad \quad \Gamma' \vdash \widehat{a} }
	\justifies
	{ \Gamma', \, u: \data{unit}(\widehat{x}) \celto \widehat{a} \vdash }
	\using
	{\scriptstyle\data{gen}}
\]
\justifies 
{\Gamma', \, u: \data{unit}(\widehat{x}) \celto \widehat{a} \vdash \widehat{u} }
\using
{\scriptstyle\data{cell'}}
\end{prooftree}
\end{minipage}%
\begin{minipage}{0.5\textwidth}
\begin{lstlisting}[language=Python, firstnumber=6]
u = Lun.add('u', x.unit(), a)
\end{lstlisting}
\end{minipage}
\end{center}

\noindent Let $\Gamma'' \eqdef \Gamma', \, u: \data{unit}(\widehat{x}) \celto \widehat{a}$. 
We produce a left unitor 2\nbd cell on $a$, and add an ``oriented equation'' exhibiting the fact that $u$ is a left unit for $m$.

\begin{center} 
\small
\begin{prooftree}
{
\[
	{ 
	\[ 
		{ \Gamma'' \vdash \widehat{u} \quad \quad \Gamma'' \vdash \widehat{a} }
		\justifies
		{ \Gamma'' \vdash \widehat{u} \cp{0} \widehat{a} }
		\using
		{\scriptstyle\data{paste}_0}
	\]
	\quad \quad 
	\Gamma'' \vdash \widehat{m} }
	\justifies
	{\Gamma'' \vdash (\widehat{u} \cp{0} \widehat{a}) \cp{1} \widehat{m} }
	\using
	{\scriptstyle\data{paste}_1}
\]
\quad \quad
\[
	{ \Gamma'' \vdash \slice{\widehat{a}}{\data{arrow}} }
	\justifies
	{ \Gamma'' \vdash \data{lunitor}_{\bord{}{-}\data{arrow}}(\widehat{a}) }
	\using
	{\scriptstyle\data{lunitor}}
\]
}
\justifies
{
	\Gamma'', \, \mathit{lu}: ((\widehat{u} \cp{0} \widehat{a}) \cp{1} \widehat{m}) \celto \data{lunitor}_{\bord{}{-}\data{arrow}}(\widehat{a}) \vdash
}
\using
{\scriptstyle\data{gen}}
\end{prooftree}
\end{center}

\begin{flushright}
\begin{minipage}{0.7\textwidth}
\begin{lstlisting}[language=Python, firstnumber=7]
lu = Lun.add('lu', u.paste(a).paste(m), a.lunitor())
\end{lstlisting}
\end{minipage}
\end{flushright}

The following is a representation of $\textit{lu}$ as a term of $\diagset$, that is, an oriented graded poset labelled with names, together with string diagram representations of $\textit{lu}$, its input boundary, and its output boundary, and the $\code{rewalt}$ code that generated them.

\noindent\begin{minipage}{0.3\textwidth}
\begin{tikzpicture}[xscale=4, yscale=6, baseline={([yshift=-.5ex]current bounding box.center)}]
\path[fill=white] (0, 0) rectangle (1, 1);
\draw[->, draw=magenta] (0.1625, 0.15) -- (0.12916666666666665, 0.35);
\draw[->, draw=magenta] (0.2125, 0.15) -- (0.5791666666666667, 0.35);
\draw[->, draw=magenta] (0.2375, 0.15) -- (0.8041666666666667, 0.35);
\draw[->, draw=magenta] (0.4875, 0.15) -- (0.3875, 0.35);
\draw[->, draw=blue] (0.16249999999999998, 0.35) -- (0.4625, 0.15);
\draw[->, draw=magenta] (0.12916666666666665, 0.4) -- (0.1625, 0.6);
\draw[->, draw=magenta] (0.1958333333333333, 0.4) -- (0.7625, 0.6);
\draw[->, draw=blue] (0.42083333333333334, 0.35) -- (0.7875, 0.15);
\draw[->, draw=magenta] (0.3875, 0.4) -- (0.4875, 0.6);
\draw[->, draw=magenta] (0.42083333333333334, 0.4) -- (0.7875, 0.6);
\draw[->, draw=blue] (0.6125, 0.35) -- (0.5125, 0.15);
\draw[->, draw=magenta] (0.6125, 0.4) -- (0.5125, 0.6);
\draw[->, draw=blue] (0.8708333333333333, 0.35) -- (0.8374999999999999, 0.15);
\draw[->, draw=blue] (0.2125, 0.6) -- (0.5791666666666667, 0.4);
\draw[->, draw=magenta] (0.19999999999999998, 0.65) -- (0.4666666666666667, 0.85);
\draw[->, draw=blue] (0.5375, 0.6) -- (0.8375, 0.4);
\draw[->, draw=magenta] (0.5, 0.65) -- (0.5, 0.85);
\draw[->, draw=blue] (0.8374999999999999, 0.6) -- (0.8708333333333333, 0.4);
\draw[->, draw=blue] (0.5333333333333333, 0.85) -- (0.7999999999999999, 0.65);
\node[text=black, font={\scriptsize \sffamily}, xshift=0pt, yshift=0pt] at (0.16666666666666666, 0.125) {0,x};
\node[text=black, font={\scriptsize \sffamily}, xshift=0pt, yshift=0pt] at (0.5, 0.125) {1,x};
\node[text=black, font={\scriptsize \sffamily}, xshift=0pt, yshift=0pt] at (0.8333333333333333, 0.125) {2,x};
\node[text=black, font={\scriptsize \sffamily}, xshift=0pt, yshift=0pt] at (0.125, 0.375) {0,x};
\node[text=black, font={\scriptsize \sffamily}, xshift=0pt, yshift=0pt] at (0.375, 0.375) {1,a};
\node[text=black, font={\scriptsize \sffamily}, xshift=0pt, yshift=0pt] at (0.625, 0.375) {2,a};
\node[text=black, font={\scriptsize \sffamily}, xshift=0pt, yshift=0pt] at (0.875, 0.375) {3,a};
\node[text=black, font={\scriptsize \sffamily}, xshift=0pt, yshift=0pt] at (0.16666666666666666, 0.625) {0,u};
\node[text=black, font={\scriptsize \sffamily}, xshift=0pt, yshift=0pt] at (0.5, 0.625) {1,m};
\node[text=black, font={\scriptsize \sffamily}, xshift=0pt, yshift=0pt] at (0.8333333333333333, 0.625) {2,a};
\node[text=black, font={\scriptsize \sffamily}, xshift=0pt, yshift=0pt] at (0.5, 0.875) {0,lu};
\end{tikzpicture}
\end{minipage}%
\begin{minipage}{0.7\textwidth}
\begin{tikzpicture}[xscale=3, yscale=3, baseline={([yshift=-.5ex]current bounding box.center)}]
\path[fill=gray!10] (0, 0) rectangle (1, 1);
\path[fill, color=gray!10] (0.5, 0.5) .. controls (0.4933333333333333, 0.5) and (0.49, 0.5833333333333333) .. (0.49, 0.75) to (0.51, 0.75) .. controls (0.51, 0.5833333333333333) and (0.5066666666666667, 0.5) .. (0.5, 0.5);
\draw[color=black, opacity=0.1] (0.5, 0.5) .. controls (0.5, 0.5) and (0.5, 0.5833333333333333) .. (0.5, 0.75);
\path[fill, color=gray!10] (0.5, 1) .. controls (0.4933333333333333, 1.0) and (0.49, 0.9166666666666666) .. (0.49, 0.75) to (0.51, 0.75) .. controls (0.51, 0.9166666666666666) and (0.5066666666666667, 1.0) .. (0.5, 1);
\draw[color=black, opacity=0.1] (0.5, 1) .. controls (0.5, 1.0) and (0.5, 0.9166666666666666) .. (0.5, 0.75);
\path[fill, color=gray!10] (0.5, 0.5) .. controls (0.6044444444444445, 0.5) and (0.6566666666666666, 0.41666666666666663) .. (0.6566666666666666, 0.25) to (0.6766666666666666, 0.25) .. controls (0.6766666666666666, 0.41666666666666663) and (0.6177777777777778, 0.5) .. (0.5, 0.5);
\draw[color=black, opacity=1] (0.5, 0.5) .. controls (0.611111111111111, 0.5) and (0.6666666666666666, 0.41666666666666663) .. (0.6666666666666666, 0.25);
\path[fill, color=gray!10] (0.6666666666666666, 0) .. controls (0.6599999999999999, 0.0) and (0.6566666666666666, 0.08333333333333333) .. (0.6566666666666666, 0.25) to (0.6766666666666666, 0.25) .. controls (0.6766666666666666, 0.08333333333333333) and (0.6733333333333333, 0.0) .. (0.6666666666666666, 0);
\draw[color=black, opacity=1] (0.6666666666666666, 0) .. controls (0.6666666666666666, 0.0) and (0.6666666666666666, 0.08333333333333333) .. (0.6666666666666666, 0.25);
\path[fill, color=gray!10] (0.5, 0.5) .. controls (0.3822222222222222, 0.5) and (0.3233333333333333, 0.41666666666666663) .. (0.3233333333333333, 0.25) to (0.3433333333333333, 0.25) .. controls (0.3433333333333333, 0.41666666666666663) and (0.39555555555555555, 0.5) .. (0.5, 0.5);
\draw[color=black, opacity=1] (0.5, 0.5) .. controls (0.38888888888888884, 0.5) and (0.3333333333333333, 0.41666666666666663) .. (0.3333333333333333, 0.25);
\path[fill, color=gray!10] (0.3333333333333333, 0) .. controls (0.32666666666666666, 0.0) and (0.3233333333333333, 0.08333333333333333) .. (0.3233333333333333, 0.25) to (0.3433333333333333, 0.25) .. controls (0.3433333333333333, 0.08333333333333333) and (0.33999999999999997, 0.0) .. (0.3333333333333333, 0);
\draw[color=black, opacity=1] (0.3333333333333333, 0) .. controls (0.3333333333333333, 0.0) and (0.3333333333333333, 0.08333333333333333) .. (0.3333333333333333, 0.25);
\node[circle, fill=black, draw=black, inner sep=1pt] at (0.5, 0.5) {};
\node[text=black, font={\scriptsize \sffamily}, xshift=4pt, yshift=4pt] at (0.5, 0.75) {a};
\node[text=black, font={\scriptsize \sffamily}, xshift=4pt, yshift=4pt] at (0.6666666666666666, 0.25) {m};
\node[text=black, font={\scriptsize \sffamily}, xshift=4pt, yshift=4pt] at (0.3333333333333333, 0.25) {u};
\node[text=black, font={\scriptsize \sffamily}, xshift=4pt, yshift=4pt] at (0.5, 0.5) {lu};
\end{tikzpicture}
\quad \quad
\begin{tikzpicture}[xscale=3, yscale=3, baseline={([yshift=-.5ex]current bounding box.center)}]
\path[fill=gray!10] (0, 0) rectangle (1, 1);
\path[fill, color=gray!10] (0.5, 0.6666666666666666) .. controls (0.4933333333333333, 0.6666666666666666) and (0.49, 0.7222222222222222) .. (0.49, 0.8333333333333334) to (0.51, 0.8333333333333334) .. controls (0.51, 0.7222222222222222) and (0.5066666666666667, 0.6666666666666666) .. (0.5, 0.6666666666666666);
\draw[color=black, opacity=1] (0.5, 0.6666666666666666) .. controls (0.5, 0.6666666666666666) and (0.5, 0.7222222222222222) .. (0.5, 0.8333333333333334);
\path[fill, color=gray!10] (0.5, 1) .. controls (0.4933333333333333, 1.0) and (0.49, 0.9444444444444444) .. (0.49, 0.8333333333333334) to (0.51, 0.8333333333333334) .. controls (0.51, 0.9444444444444444) and (0.5066666666666667, 1.0) .. (0.5, 1);
\draw[color=black, opacity=1] (0.5, 1) .. controls (0.5, 1.0) and (0.5, 0.9444444444444444) .. (0.5, 0.8333333333333334);
\path[fill, color=gray!10] (0.3333333333333333, 0.3333333333333333) .. controls (0.32666666666666666, 0.3333333333333333) and (0.3233333333333333, 0.38888888888888884) .. (0.3233333333333333, 0.5) to (0.3433333333333333, 0.5) .. controls (0.3433333333333333, 0.38888888888888884) and (0.33999999999999997, 0.3333333333333333) .. (0.3333333333333333, 0.3333333333333333);
\draw[color=black, opacity=1] (0.3333333333333333, 0.3333333333333333) .. controls (0.3333333333333333, 0.3333333333333333) and (0.3333333333333333, 0.38888888888888884) .. (0.3333333333333333, 0.5);
\path[fill, color=gray!10] (0.5, 0.6666666666666666) .. controls (0.3822222222222222, 0.6666666666666666) and (0.3233333333333333, 0.611111111111111) .. (0.3233333333333333, 0.5) to (0.3433333333333333, 0.5) .. controls (0.3433333333333333, 0.611111111111111) and (0.39555555555555555, 0.6666666666666666) .. (0.5, 0.6666666666666666);
\draw[color=black, opacity=1] (0.5, 0.6666666666666666) .. controls (0.38888888888888884, 0.6666666666666666) and (0.3333333333333333, 0.611111111111111) .. (0.3333333333333333, 0.5);
\path[fill, color=gray!10] (0.5, 0.6666666666666666) .. controls (0.6044444444444445, 0.6666666666666666) and (0.6566666666666666, 0.5277777777777778) .. (0.6566666666666666, 0.25) to (0.6766666666666666, 0.25) .. controls (0.6766666666666666, 0.5277777777777778) and (0.6177777777777778, 0.6666666666666666) .. (0.5, 0.6666666666666666);
\draw[color=black, opacity=1] (0.5, 0.6666666666666666) .. controls (0.611111111111111, 0.6666666666666666) and (0.6666666666666666, 0.5277777777777778) .. (0.6666666666666666, 0.25);
\path[fill, color=gray!10] (0.6666666666666666, 0) .. controls (0.6599999999999999, 0.0) and (0.6566666666666666, 0.08333333333333333) .. (0.6566666666666666, 0.25) to (0.6766666666666666, 0.25) .. controls (0.6766666666666666, 0.08333333333333333) and (0.6733333333333333, 0.0) .. (0.6666666666666666, 0);
\draw[color=black, opacity=1] (0.6666666666666666, 0) .. controls (0.6666666666666666, 0.0) and (0.6666666666666666, 0.08333333333333333) .. (0.6666666666666666, 0.25);
\path[fill, color=gray!10] (0.3333333333333333, 0.3333333333333333) .. controls (0.32666666666666666, 0.3333333333333333) and (0.3233333333333333, 0.2777777777777778) .. (0.3233333333333333, 0.16666666666666666) to (0.3433333333333333, 0.16666666666666666) .. controls (0.3433333333333333, 0.2777777777777778) and (0.33999999999999997, 0.3333333333333333) .. (0.3333333333333333, 0.3333333333333333);
\draw[color=black, opacity=0.1] (0.3333333333333333, 0.3333333333333333) .. controls (0.3333333333333333, 0.3333333333333333) and (0.3333333333333333, 0.2777777777777778) .. (0.3333333333333333, 0.16666666666666666);
\path[fill, color=gray!10] (0.3333333333333333, 0) .. controls (0.32666666666666666, 0.0) and (0.3233333333333333, 0.05555555555555555) .. (0.3233333333333333, 0.16666666666666666) to (0.3433333333333333, 0.16666666666666666) .. controls (0.3433333333333333, 0.05555555555555555) and (0.33999999999999997, 0.0) .. (0.3333333333333333, 0);
\draw[color=black, opacity=0.1] (0.3333333333333333, 0) .. controls (0.3333333333333333, 0.0) and (0.3333333333333333, 0.05555555555555555) .. (0.3333333333333333, 0.16666666666666666);
\node[circle, fill=black, draw=black, inner sep=1pt] at (0.3333333333333333, 0.3333333333333333) {};
\node[circle, fill=black, draw=black, inner sep=1pt] at (0.5, 0.6666666666666666) {};
\node[text=black, font={\scriptsize \sffamily}, xshift=4pt, yshift=4pt] at (0.5, 0.8333333333333334) {a};
\node[text=black, font={\scriptsize \sffamily}, xshift=4pt, yshift=4pt] at (0.3333333333333333, 0.5) {a};
\node[text=black, font={\scriptsize \sffamily}, xshift=4pt, yshift=4pt] at (0.6666666666666666, 0.25) {a};
\node[text=black, font={\scriptsize \sffamily}, xshift=4pt, yshift=4pt] at (0.3333333333333333, 0.16666666666666666) {x};
\node[text=black, font={\scriptsize \sffamily}, xshift=4pt, yshift=4pt] at (0.3333333333333333, 0.3333333333333333) {u};
\node[text=black, font={\scriptsize \sffamily}, xshift=4pt, yshift=4pt] at (0.5, 0.6666666666666666) {m};
\end{tikzpicture}
\quad \quad
\begin{tikzpicture}[xscale=3, yscale=3, baseline={([yshift=-.5ex]current bounding box.center)}]
\path[fill=gray!10] (0, 0) rectangle (1, 1);
\path[fill, color=gray!10] (0.5, 0.5) .. controls (0.4933333333333333, 0.5) and (0.49, 0.5833333333333333) .. (0.49, 0.75) to (0.51, 0.75) .. controls (0.51, 0.5833333333333333) and (0.5066666666666667, 0.5) .. (0.5, 0.5);
\draw[color=black, opacity=1] (0.5, 0.5) .. controls (0.5, 0.5) and (0.5, 0.5833333333333333) .. (0.5, 0.75);
\path[fill, color=gray!10] (0.5, 1) .. controls (0.4933333333333333, 1.0) and (0.49, 0.9166666666666666) .. (0.49, 0.75) to (0.51, 0.75) .. controls (0.51, 0.9166666666666666) and (0.5066666666666667, 1.0) .. (0.5, 1);
\draw[color=black, opacity=1] (0.5, 1) .. controls (0.5, 1.0) and (0.5, 0.9166666666666666) .. (0.5, 0.75);
\path[fill, color=gray!10] (0.5, 0.5) .. controls (0.6044444444444445, 0.5) and (0.6566666666666666, 0.41666666666666663) .. (0.6566666666666666, 0.25) to (0.6766666666666666, 0.25) .. controls (0.6766666666666666, 0.41666666666666663) and (0.6177777777777778, 0.5) .. (0.5, 0.5);
\draw[color=black, opacity=1] (0.5, 0.5) .. controls (0.611111111111111, 0.5) and (0.6666666666666666, 0.41666666666666663) .. (0.6666666666666666, 0.25);
\path[fill, color=gray!10] (0.6666666666666666, 0) .. controls (0.6599999999999999, 0.0) and (0.6566666666666666, 0.08333333333333333) .. (0.6566666666666666, 0.25) to (0.6766666666666666, 0.25) .. controls (0.6766666666666666, 0.08333333333333333) and (0.6733333333333333, 0.0) .. (0.6666666666666666, 0);
\draw[color=black, opacity=1] (0.6666666666666666, 0) .. controls (0.6666666666666666, 0.0) and (0.6666666666666666, 0.08333333333333333) .. (0.6666666666666666, 0.25);
\path[fill, color=gray!10] (0.5, 0.5) .. controls (0.3822222222222222, 0.5) and (0.3233333333333333, 0.41666666666666663) .. (0.3233333333333333, 0.25) to (0.3433333333333333, 0.25) .. controls (0.3433333333333333, 0.41666666666666663) and (0.39555555555555555, 0.5) .. (0.5, 0.5);
\draw[color=black, opacity=0.1] (0.5, 0.5) .. controls (0.38888888888888884, 0.5) and (0.3333333333333333, 0.41666666666666663) .. (0.3333333333333333, 0.25);
\path[fill, color=gray!10] (0.3333333333333333, 0) .. controls (0.32666666666666666, 0.0) and (0.3233333333333333, 0.08333333333333333) .. (0.3233333333333333, 0.25) to (0.3433333333333333, 0.25) .. controls (0.3433333333333333, 0.08333333333333333) and (0.33999999999999997, 0.0) .. (0.3333333333333333, 0);
\draw[color=black, opacity=0.1] (0.3333333333333333, 0) .. controls (0.3333333333333333, 0.0) and (0.3333333333333333, 0.08333333333333333) .. (0.3333333333333333, 0.25);
\node[text=black, font={\scriptsize \sffamily}, xshift=4pt, yshift=4pt] at (0.5, 0.75) {a};
\node[text=black, font={\scriptsize \sffamily}, xshift=4pt, yshift=4pt] at (0.6666666666666666, 0.25) {a};
\node[text=black, font={\scriptsize \sffamily}, xshift=4pt, yshift=4pt] at (0.3333333333333333, 0.25) {x};
\end{tikzpicture}

\begin{lstlisting}[language=Python, firstnumber=8]
lu.hasse(tikz=True)
lu.draw(bgcolor='gray!10', tikz=True)
lu.input.draw(bgcolor='gray!10', tikz=True)
lu.output.draw(bgcolor='gray!10', tikz=True)
\end{lstlisting}
\end{minipage}
\end{exm}

\begin{comm}
Provided we have a unique underlying representation of shapes, as described in Section \ref{sec:uniquerep}, every term of $\diagset$ also has a unique representation.
In this sense, terms of $\diagset$ are ``noncomputational'': all the computation, which consists exclusively of computing and matching shapes, happens under the hood before a term is even created, so the equality theory of terms is trivial.

This is intended. Rather than a computational theory in itself, $\diagset$ is intended as a \emph{substrate for computational theories} according to the paradigm of higher-dimensional rewriting.
A term $t: r^- \celto r^+$ can be seen as a rewrite of the ``lower-dimensional'' term $r^-$ to the term $r^+$, and the extension of $t$ via the $\data{paste}_k$ rules establishes how the rewrite can happen in a wider context. In this sense, every well\nbd formed context in $\diagset$ contains its own internal computational theory on terms of each dimension.
\end{comm}

\begin{rmk}
While ``rewrites in context'' can be built with the $\data{paste}_k$ rules, this is quite impractical.
In practice, one wants to start from a diagram and apply a generating rewrite directly to a subdiagram.
This is modelled by \emph{pasting along a subdiagram} \cite[\S 4.12]{hadzihasanovic2020diagrammatic} in the theory of diagrammatic sets.

Pasting along a subdiagram is implemented in $\code{rewalt}$ with methods $\code{to\_inputs}$ and $\code{to\_outputs}$. 
These invoke a procedure for recognising subdiagrams, which currently uses a quite naive algorithm.
The issue of recognising subdiagrams deserves further study, so we leave it to future work. 
\end{rmk}

\section*{Conclusions and outlook}

We have provided a formal implementation of ``plain'' diagrammatic sets. 
An obvious next step is the formalisation of \emph{weakly invertible} cells, and then of diagrammatic sets with weak composites, a model of weak higher categories \cite[Sections 5, 6]{hadzihasanovic2020diagrammatic}.
This is in fact part of \code{rewalt}, but still lacks a formal analysis.

In addition, we still have a limited range of high-level methods for handling weak units.
We may want, for example, flexible higher-dimensional versions of ``Mac Lane triangle'' rules for shuffling weak units around.
Development of these methods, and others tailored to specific applications, will likely go hand in hand with practical experience in the use of \code{rewalt} as a proof assistant.

To conclude, we have only scratched the surface of the algorithm and complexity theory of diagram rewriting in higher dimensions.
In particular, we have not yet studied the problem of searching for a subdiagram within another diagram, whose solution is essential to any form of fully automated or assisted diagram rewriting.
We plan to tackle this problem in future work.

\nocite{*}
\bibliographystyle{eptcs}
\bibliography{main}

\end{document}